\newcommand{\ujump}{{\kappa_1}}
\newcommand{\ujn}{{\kappa_2}}
\newtheorem{theorem}{Theorem}[section]
\newtheorem{remark}[theorem]{Remark}
\newcounter{mnote}
\let\oldmarginpar\marginpar
\renewcommand\marginpar[1]{\-\oldmarginpar[\raggedleft\footnotesize #1]
  {\raggedright\footnotesize #1}}
\numberwithin{equation}{section}
\setlist[enumerate]{nosep}
\def\rot{{\rm rot}}
\def\curl{{\rm curl}}
\def\dv{{\rm div}}
\journal{arXiv}%{Journal of Computational Physics}
\begin{document}

\title{A Natural Deep Ritz Method for Essential Boundary Value Problems}
%\thanks{The research is partially supported by NSFC (92370205, 12271512).}

\author{Haijun Yu}
\ead{hyu@lsec.cc.ac.cn}
%\address{LSEC, Institute of Computational Mathematics and
%Scientific/Engineering Computing, Academy of Mathematics and System Sciences,
%Chinese Academy of Sciences, Beijing 100190; University of Chinese Academy of
%Sciences, Beijing, 100049; People's Republic of China}
%\email{hyu@lsec.cc.ac.cn}

\author{Shuo Zhang\corref{cor1}}
\cortext[cor1]{Corresponding author.}
\ead{szhang@lsec.cc.ac.cn}

\address{LSEC, Institute of Computational Mathematics and
    Scientific/Engineering Computing, Academy of Mathematics and System
    Sciences, Chinese Academy of Sciences, Beijing 100190; University of
    Chinese Academy of Sciences, Beijing, 100049; People's Republic of China}

\begin{abstract}
Deep neural network approaches show promise in solving partial differential equations. However, unlike traditional numerical methods, they face challenges in enforcing essential boundary conditions. The widely adopted penalty-type methods, for example, offer a straightforward implementation but introduces additional complexity due to the need for hyper-parameter tuning; moreover, the use of a large penalty parameter can lead to artificial extra stiffness, complicating the optimization process. In this paper, we propose a novel, intrinsic approach to impose essential boundary conditions through a framework inspired by intrinsic structures. We demonstrate the effectiveness of this approach using the deep Ritz method applied to Poisson problems, with the potential for extension to more general equations and other deep learning techniques. Numerical results are provided to substantiate the efficiency and robustness of the proposed method.
\end{abstract}

\begin{keyword}
deep neural network \sep essential boundary value problem \sep deep Ritz method \sep penalty free \sep interfacial value problem
\end{keyword}

\maketitle

%\tableofcontents

%\newpage
%
%
%
\section{Introduction}

%
%\subsection{\color{red} overview of methods}

In recent years, there has been a rapidly growing interest in using deep neural networks (DNNs) to solve partial differential equations (PDEs). Early attempts to apply neural networks to differential equations date back over three decades, with Hopfield neural networks \cite{hopfield_neural_1982} being employed to represent discretized solutions \cite{lee_neural_1990}. Soon after, methodologies were developed to construct closed-form numerical solutions using neural networks \cite{milligen_neural_1995}. Since then, extensive research has focused on solving differential equations with various types of neural networks, including feedforward neural networks \cite{lagaris_artificial_1998,mcfall_artificial_2009,lagaris_neural_2000, mcfall_automated_2013}, radial basis networks \cite{maiduy_numerical_2001}, and wavelet networks \cite{li_integration_2010}. With the advancement of deep learning techniques \cite{hinton_fast_2006, kingmaAdamMethodStochastic2015, he_deep_2016}, neural networks with substantially more hidden layers have become powerful tools. Innovations such as rectified linear unit (ReLU) functions \cite{glorotDeepSparseRectifier2011}, generative adversarial networks (GANs) \cite{goodfellow_generative_2014}, and residual networks (ResNets) \cite{he_deep_2016} exemplify these advances, showcasing the strong representational capabilities of DNNs \cite{petersen_neural_2018,li_better_2020,li_PowerNet_2020, tang_ChebNet_2024, he_ReLU_2020,shen_deep_2020}. These developments have spurred the creation of numerous DNN-based methods for PDEs, including the deep Galerkin method (DGM) \cite{sirignano_dgm_2018}, deep Ritz method (DRM) \cite{e_deepRitz_2018}, physics-informed neural networks (PINNs) \cite{raissi_pinn_2019}, finite neuron method (FNM) \cite{xu_finite_2020}, weak adversarial networks (WANs) \cite{zang_weak_2020}, and mixed residual methods (MIM) \cite{lyu_mim_2022}. These methods have been widely adopted across various applications, successfully addressing complex problems modeled by differential equations \cite{e_deepRitz_2018,liao_deepNitsche_2021,raissi_hidden_2020,berg_unified_2018,long_PDEnet_2019,khoo_solving_2021,yu_onsagernet_2021a,chen_Constructing_2024}.

\medskip

In the design and implementation of neural network-based methods, the imposition of boundary conditions is a critical challenge. Notably, this issue is also encountered in certain classical numerical methods, such as finite element methods, where handling boundary conditions can be complex enough to require techniques like Nitsche's method \cite{nitsche_1971}, later refined by Stenberg \cite{stenberg_techniques_1995}. However, the challenges differ significantly in neural network-based approaches. Unlike classical numerical methods, which leverage basis functions or discretization stencils with compact supports or sparse structures, neural network methods utilize DNNs as trial functions, which are globally defined. Consequently, enforcing boundary conditions, even for problems that are straightforward in classical methods, becomes nontrivial due to the global structure of DNNs. For the natural boundary conditions, the deep Ritz method reformulates the original problem into a variational form, which can reduce the smoothness requirements and potentially lower the training cost by allowing natural boundary conditions to be imposed without additional operations. However, because the trial functions within the approximation sets are generally non-interpolatory, imposing essential boundary conditions remains a challenging task.

\medskip

To date, three primary approaches have been developed for addressing essential boundary conditions in deep learning-based numerical methods. The first approach is the conforming method, which aims to construct neural network functions that exactly satisfy the essential boundary conditions \cite{sheng_PFNN_2021,berg_unified_2018,lyu_mim_2022}. Generally, the network function $u_{NN}(x)$ is represented as the combination of two parts: $\displaystyle u_{NN}(x) = u_b(x) + d_\Gamma(x) u^0_{NN}(x)$, one reflecting the essential boundary condition, and the other vanishing on the boundary $\Gamma$ by the aid of a ``distance function" or a ``geometry-aware" function $d_\Gamma(x)$. Both test and trial functions can be constructed this way. However, when the domain has a complicated boundary (or even not that complicated), it is not easy to construct a distance function to preserve the asymptotic equivalence.

Another one is the penalty method, which is a very general concept and belongs to the so-called nonconforming method \cite{e_deepRitz_2018,raissi_pinn_2019,sirignano_dgm_2018,zhang_learning_2020,zang_weak_2020,huang_augmented_2022}. For this method, an additional surface term is introduced into the variational formulation to enforce the boundary conditions. Take the Poisson equation with Dirichlet boundary condition \eqref{eq:PoissonStrong} as example:
\begin{equation}\label{eq:PoissonStrong}
    \left\{
    \begin{array}{rll}
        -\Delta u & =f & \mbox{in}\,\Omega,
        \\
        u         & =g & \mbox{on}\,\Gamma=\partial\Omega.
    \end{array}
    \right.
\end{equation}
The deep Ritz method~\cite{e_deepRitz_2018} minimize the following objective
\begin{equation} \label{eq:drm}
    \mathcal{L}_{DRM}(u) = \biggl[ \sum_{x_j \in \mathcal{D}} \frac12 |\nabla u(x_j) |^2 - f(x_j) u(x_j) \biggr] + \beta \sum_{x_j \in \mathcal{D}_\Gamma}\bigl(u(x_j) - u_b(x_j)\bigr)^2,
\end{equation}
where $\mathcal{D}$ and $\mathcal{D}_\Gamma$ define the training data set in the domain and on the boundary, respectively. PINN method is a least square method for the strong form of the PDE, but the the handling of the essential boundary condition is similar to deep Ritz method:
\begin{equation} \label{eq:pinn}
    \mathcal{L}_{PINN}(u) = \biggl[ \sum_{x_j \in \mathcal{D}} |\Delta u(x_j) + f(x_j) |^2 \biggr] + \beta \sum_{x_j \in \mathcal{D}_\Gamma}\bigl(u(x_j) - u_b(x_j)\bigr)^2.
\end{equation}
Careful balancing of terms within the functional framework is essential to ensure the well-posedness and accuracy of the scheme. Addressing this issue, the deep Nitsche method, as proposed in \cite{liao_deepNitsche_2021}, applies Nitsche's variational formula to second-order elliptic problems to avoid the use of a large penalty parameter. Nevertheless, some degree of tuning remains necessary for the penalty parameter, and a theoretical basis for determining an optimal penalty value is still absent.

In contrast to the penalty method, the Lagrange multiplier method addresses essential boundary conditions by treating them as constraints within the minimization process. This method has been effectively used to impose essential boundary conditions in finite element methods \cite{babuska_finite_1973} and wavelet methods \cite{dahmen_appending_2001}. When the approximation function spaces are appropriately chosen satisfying the so-called inf-sup condition, this method can achieve optimal convergence rates \cite{babuska_finite_1973,dahmen_appending_2001}. While the Lagrange multiplier method can also enforce boundary conditions in neural network-based methods, its effectiveness depends on the stable construction and efficient resolution of the extra constrained optimization problem.

\medskip

In this paper, we introduce a novel neural network-based method for solving essential boundary value problems. Our approach involves transforming the original problem into a sequence of natural boundary value problems, which are then solved sequentially or concurrently using the deep Ritz method. Unlike the previously mentioned approaches, this technique constructs a new framework for imposing essential boundary conditions. We refer to this method as the natural deep Ritz method. This approach simplifies the training process and avoids introducing additional errors associated with boundary condition enforcement. To validate our method, we examine essential boundary and interface value problems for second-order divergence-form equations with constant, variable, or discontinuous coefficients, providing numerical examples that demonstrate the effectiveness.

Evidently, a primary ingredient of the proposed method lies in its adjoint approach to handling essential boundary conditions. This approach is grounded in the mathematical framework of the de Rham complex and its dual complex, which serve as foundational structures. By leveraging these complexes, which connect kernel spaces to specific range spaces, we can represent the difference between the solutions of natural and essential boundary value problems as the solution to another natural boundary value problem. This formulation allows us to construct a purely natural approach equivalent to the original problem.

While we do not delve extensively into the formal structure of the de Rham and dual complexes, it is important to highlight that our method diverges from the traditional mixed formulations common in classical numerical methods. Notably, we do not introduce the gradient of the unknown function as an auxiliary variable. Moreover, unlike classical mixed formulations, our approach avoids the need for constructing a saddle point problem, which would typically require rigorous continuous and discrete inf-sup conditions for stability and accuracy. In our framework, the solution is reduced to solving three elliptic subproblems using a standard machine learning algorithm. This approach eliminates the need for training an additional network to capture the boundary representation, tuning penalty parameters, or ensuring inf-sup conditions for a boundary Lagrangian multiplier. The conciseness of the present method is among its most significant advantages, both in theory and implementation.

\medskip

The remaining parts of the paper are organized as follows. In Section \ref{sec:natural}, we present the equivalent natural boundary value problem formulation of the respective essential boundary value problems. In Section \ref{sec:ndrm}, the deep Ritz method based on the natural formulation, namely the natural deep Ritz methods, is given. Numerical experiments are presented in Section \ref{sec:ne} to verify the proposed method. We end the paper with some concluding remarks in Section \ref{sec:conc}

\section{A natural formulation of the essential boundary value problems}
\label{sec:natural}

In this section, we derive natural formulations for the second-order problems of divergence form with constant, variable, and discontinuous coefficients, respectively; i.e., we rewrite the essential boundary value problems and interface value problems to a series of natural boundary value problems and interface value problems to solve. We are focused on Laplace problems on two-dimensional domains here, and the method can be generated to higher dimensions, as well as to other self-adjoint problems. 

In this paper, $\Omega\subset\mathbb{R}^2$ stands for a simply connected domain with a boundary $\Gamma$, and we use $L^2(\Omega)$, $H^1(\Omega)$, $H^1_0(\Omega)$, $H^{-1}(\Omega)$, $H^{1/2}(\Gamma)$ and $H^{-1/2}(\Gamma)$ for the standard Sobolev spaces. 

\subsection{Poisson equations of Dirichlet type}

We first consider the model problem with constant coefficients: \eqref{eq:PoissonStrong}.
Its variational formulation is to find $u\in H^1_g(\Omega):=\bigl\{\, w\in
    H^1(\Omega):w|_\Gamma=g\,\bigr\}$, such that
\begin{equation}\label{eq:modelpoisson}
    (\nabla u,\nabla v)=\langle f,v\rangle_{H^{-1}(\Omega)\times H^1_0(\Omega)},\ \ \forall\,v\in H^1_0(\Omega).
\end{equation}
Here, $\langle \cdot,\cdot\rangle_{H^{-1}(\Omega)\times H^1_0(\Omega)}$ stands for the duality between $H^{-1}(\Omega)$ and $H^1_0(\Omega)$. In the sequel, we use $\langle\cdot,\cdot\rangle$ to denote dualities of different kinds, while the subscripts may be dropped when no ambiguity is introduced.

\begin{theorem}\label{thm:poisson} Let $u$ be the solution of
    \eqref{eq:modelpoisson}, and $u^*$ be obtained by the four steps below:
    \begin{enumerate}
        \item Find $ \tilde u \in H^1_\Gamma(\Omega):=\{w\in H^1(\Omega):\int_\Gamma w=0\}$, such that
              \begin{equation}\label{eq:poissonneumann}
                  (\nabla  \tilde u ,\nabla v)=\langle \tilde f,v\rangle_{(H^1_\Gamma(\Omega))'\times H^1_\Gamma(\Omega)},\ \ \forall\,v\in H^1_\Gamma(\Omega),
              \end{equation}
              where $\tilde f$ is any extension of $f$ onto
              $(H^1_\Gamma(\Omega))'$ such that $\langle\tilde
                  f,v\rangle=\langle f,v\rangle$ for $v\in H^1_0(\Omega)$.
        \item Find a $\varphi\in H^1(\Omega)$, such that
              \begin{equation}\label{eq:poissonkernelmodi}
                  (\curl\varphi,\curl\psi)=\langle\partial_t (g- \tilde u |_\Gamma),\psi\rangle_\Gamma,\ \forall\,\psi\in H^1(\Omega);
              \end{equation}
Here, the scalar curl operator is defined as $\curl\,w(x,y):= (\partial_{y} w, - \partial_{x} w)$, and $\langle\cdot,\cdot\rangle_\Gamma$ is a duality between $H^{-1/2}(\Gamma)$ and $H^{1/2}(\Gamma)$, which evaluates as the $L^2$ inner product on $\Gamma$ for sufficiently smooth functions. 
        \item  Find a $u_c\in H^1(\Omega)$, such that
              \begin{equation}\label{eq:poissonopera}
                  (\nabla u_c,\nabla v)=(\nabla  \tilde u {-} \curl\varphi,\nabla v),\ \forall\,v\in H^1(\Omega).
              \end{equation}
        \item  Set $u^*=u_c-C$, with $C=\frac{1}{|\gamma|}\int_{\gamma} (u_c-g)$
              for any $\gamma\subset \Gamma$ such that $|\gamma|\neq 0$.
    \end{enumerate}
    Then $u^*=u$.
\end{theorem}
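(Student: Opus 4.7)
The plan is to build the four-step procedure around the 2D Hodge/de Rham structure: on a simply connected $\Omega$, any vector field that is simultaneously divergence-free, curl-free, and has vanishing tangential trace on $\Gamma$ must be zero. Accordingly, I would try to show that $w := \nabla(u-\tilde u)+\curl\varphi$ satisfies all three conditions, so $w\equiv 0$, giving $\nabla u=\nabla\tilde u-\curl\varphi$. Step~3 then forces $u_c$ to agree with $u$ up to an additive constant, and Step~4 fixes exactly that constant.

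First, I would verify that Step~1 is well-posed and that the residual $u-\tilde u$ is harmonic. The subspace $H^1_\Gamma(\Omega)$ admits a Poincar\'e inequality through its mean-zero boundary-trace constraint, giving unique solvability of \eqref{eq:poissonneumann}. Testing \eqref{eq:modelpoisson} and \eqref{eq:poissonneumann} against an arbitrary $v\in H^1_0(\Omega)\subset H^1_\Gamma(\Omega)$ and using the agreement $\langle\tilde f,v\rangle=\langle f,v\rangle$ on $H^1_0$, I obtain $(\nabla(u-\tilde u),\nabla v)=0$ for all $v\in H^1_0(\Omega)$, hence $u-\tilde u$ is harmonic in $\Omega$. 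Consequently $\dv\nabla(u-\tilde u)=0$ and $\rot\nabla(u-\tilde u)=0$ distributionally in $\Omega$, while the tangential trace equals $\partial_t(u-\tilde u)|_\Gamma=\partial_t(g-\tilde u|_\Gamma)\in H^{-1/2}(\Gamma)$.

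Next, I would recognize Step~2 as a Neumann problem for $\varphi$. Using $(\curl\varphi,\curl\psi)=(\nabla\varphi,\nabla\psi)$, \eqref{eq:poissonkernelmodi} becomes $(\nabla\varphi,\nabla\psi)=\langle\partial_t(g-\tilde u|_\Gamma),\psi\rangle_\Gamma$ for every $\psi\in H^1(\Omega)$. The compatibility condition holds because $\partial_t$ paired with the constant $1$ on the closed curve $\Gamma$ vanishes, so $\varphi$ exists and is unique up to constants, with $-\Delta\varphi=0$ in $\Omega$ and $\partial_n\varphi=\partial_t(g-\tilde u|_\Gamma)$ on $\Gamma$. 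A short computation with $t=(-n_2,n_1)$ gives $\curl\varphi\cdot t=-\partial_n\varphi$ on $\Gamma$, which precisely cancels the tangential trace of $\nabla(u-\tilde u)$ identified above.

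The key step is then to show $w\equiv0$. Both $\dv w$ and $\rot w$ vanish in $\Omega$ (since $\dv\curl=0$, $\rot\nabla=0$, and both $u-\tilde u$ and $\varphi$ are harmonic), and $w\cdot t=0$ on $\Gamma$ by the trace cancellation. Simple connectivity of $\Omega$ together with $\rot w=0$ yields a scalar potential $\eta\in H^1(\Omega)$ with $w=\nabla\eta$; the tangential-trace condition forces $\eta$ to be constant on the connected curve $\Gamma$, and $\Delta\eta=\dv w=0$ makes $\eta$ harmonic, so $\eta$ is constant throughout $\Omega$ and $w=0$. Thus $\nabla u=\nabla\tilde u-\curl\varphi$ a.e., which is exactly the right-hand side of \eqref{eq:poissonopera}; Neumann uniqueness for $u_c$ up to constants gives $u_c=u+C_0$ for some $C_0\in\mathbb{R}$, and since $u_c-g\equiv C_0$ on $\Gamma$, the constant $C$ computed in Step~4 coincides with $C_0$ and $u^*=u$. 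The main obstacle I anticipate is executing this de~Rham argument rigorously at the available regularity: $\partial_t(g-\tilde u|_\Gamma)$ only lives in $H^{-1/2}(\Gamma)$, so the tangential and normal trace identifications must be interpreted through the $H^{-1/2}$--$H^{1/2}$ duality rather than pointwise, and the scalar-potential reconstruction needs to be stated within the appropriate $H(\dv)\cap H(\rot)$ framework on a simply connected planar domain; once these trace pairings are set up, the remaining verifications are essentially bookkeeping.
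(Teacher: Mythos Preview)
Your proposal is correct and rests on the same underlying 2D de~Rham/Hodge structure as the paper, but the logical direction is reversed. The paper argues: since $(\nabla(u-\tilde u),\nabla v)=0$ for all $v\in H^1_0(\Omega)$, the field $\nabla(u-\tilde u)$ is divergence-free on the simply connected domain, hence equals $\curl\varphi_0$ for some $\varphi_0\in H^1(\Omega)$; an integration by parts then shows this $\varphi_0$ satisfies \eqref{eq:poissonkernelmodi}, so any Step~2 solution $\varphi$ differs from $\varphi_0$ only by a constant and $\curl\varphi=\nabla(u-\tilde u)$ follows immediately. You instead take the Step~2 solution $\varphi$ as given, form the residual $w=\nabla(u-\tilde u)+\curl\varphi$, and kill it via a Liouville-type argument (curl-free $\Rightarrow$ $w=\nabla\eta$, zero tangential trace $\Rightarrow$ $\eta$ constant on $\Gamma$, div-free $\Rightarrow$ $\eta$ harmonic, hence constant). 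The paper's route is shorter because it invokes only one de~Rham fact (div-free $\Rightarrow$ curl of a potential) and then uniqueness of Neumann solutions up to constants; your route uses the dual fact (curl-free $\Rightarrow$ gradient of a potential) together with an explicit harmonic-uniqueness step, which makes the trace cancellation $\curl\varphi\cdot t=-\partial_n\varphi=-\partial_t(g-\tilde u|_\Gamma)$ fully visible. Both are valid, and your concern about carrying out the trace identifications in $H^{-1/2}(\Gamma)$ applies equally to the paper's terse version.
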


\begin{proof}
    By \eqref{eq:modelpoisson} and \eqref{eq:poissonneumann}, $(\nabla u-\nabla
        \tilde u ,\nabla v)=0,\ \ \forall\,v\in H^1_0(\Omega)$ and it follows that
    $\nabla u-\nabla \tilde u =\curl\varphi$ for some $\varphi\in H^1(\Omega)$.
    Further, $\rot\,\curl\varphi=0$. Therefore, for any $\psi\in H^1(\Omega)$,
    we have
    $(\curl\varphi,\curl\psi)=(\rot\curl\varphi,\psi)+\langle\curl\varphi\cdot\mathbf{t},\psi\rangle_{\Gamma}=\langle(\nabla
        u-\nabla \tilde u
        )\cdot\mathbf{t},\psi\rangle_{\Gamma}=\langle\partial_\mathbf{t}(g- \tilde u
        |_\Gamma),\psi\rangle_\Gamma$, namely $\varphi$ satisfied
    \eqref{eq:poissonkernelmodi}. Now we obtain by \eqref{eq:poissonopera} that
    $\nabla u_c=\nabla u$. Then $u_c-u$ is a constant which can be corrected by
    Step (4) and finally, we are lead to that $u^*=u$. The proof is completed.
\end{proof}

\begin{remark}
    \begin{enumerate}
        \item The solutions of the second and third steps are not unique up to
              constant, though, these solutions will give the same correct solution
              at the end of the algorithm.
        \item To obtain $\tilde u$, we may solve for $\tilde u\in H^1(\Omega)$
              $$
                  (\nabla \tilde u,\nabla v) =\langle \tilde f,v-\fint_\Gamma v\rangle_{(H^1_\Gamma(\Omega))'\times H^1_\Gamma(\Omega)},
                  \ \ \forall\,v\in H^1(\Omega);
              $$
        \item The last step can be done by least square.
    \end{enumerate}
\end{remark}

\begin{remark}
    We can interprate formally the first three subproblems in the formulation of natural boundary value problems as below:
    \begin{enumerate}
        \item The boundary value problem corresponding to
              \eqref{eq:poissonneumann}:
              \begin{equation}
                \label{eq:NDB1}
                \left\{
                  \begin{array}{rll}
                      -\Delta  \tilde u                             & =f                         & \mbox{in}\,\Omega,
                      \\
                      \frac{\partial \tilde u }{\partial\mathbf{n}} & =
                      {-} \frac{1}{|\Gamma|}\int_\Omega f,  & \mbox{on}\,\partial\Omega.
                  \end{array}
                  \right.
              \end{equation}
        \item The boundary value problem corresponding to
              \eqref{eq:poissonkernelmodi}:
              \begin{equation}
                \label{eq:NDB2}
                \left\{
                  \begin{array}{rll}
                      -\Delta \varphi             & =0                                              & \mbox{in}\,\Omega,
                      \\
                      \curl\varphi\cdot\mathbf{t} & =\partial_\mathbf{t}g-\partial_\mathbf{t} \tilde u , & \mbox{on}\,\partial\Omega.
                  \end{array}
                  \right.
              \end{equation}
        \item  The boundary value problem corresponding to
              \eqref{eq:poissonopera}:
              \begin{equation}
                \label{eq:NDB3}
                \left\{
                  \begin{array}{rll}
                      -\Delta u_c                             & =f                                                                  & \mbox{in}\,\Omega,
                      \\
                      \frac{\partial u_c}{\partial\mathbf{n}} & =\partial_\mathbf{n} \tilde u {-} \partial_\mathbf{t}\varphi, & \mbox{on}\,\partial\Omega.
                  \end{array}
                  \right.
              \end{equation}
    \end{enumerate}

\end{remark}

\subsection{Elliptic problem with varying coefficient in divergence form}
Let $\mathcal{A} $ be a varying coefficient matrix  such that
\begin{equation}\label{equation}
    \lambda |\xi|^2 \le \mathcal{A}_{ij}(x)\xi_i \xi_j \le \Lambda |\xi|^2\qquad\mbox{on}\ \ \Omega.
\end{equation}
We further consider a second order problem of divergence form:
\begin{equation}\label{eq:varc-strongform}
    \left\{
    \begin{array}{rll}
        -\dv(\mathcal{A} ^2\nabla u) & =f & \mbox{in}\,\Omega,
        \\
        u                       & =g & \mbox{on}\,\Gamma.
    \end{array}
    \right.
\end{equation}

It is useful to rewrite $-\dv\circ(\mathcal{A} ^2\nabla) =(-\dv\mathcal{A} )\circ(\mathcal{A}
    \nabla)$. Note that, equipped with proper spaces, the operators $-\dv\mathcal{A} $
and $\mathcal{A} \nabla$ are adjoint operators of each other, and we write the
variational formulation to be: find $u\in H^1_g(\Omega)$, such that
\begin{equation}\label{eq:modeldivformvc}
    (\mathcal{A} \nabla u,\mathcal{A} \nabla v)=\langle f,v\rangle,\ \ \forall\,v\in H^1_0(\Omega).
\end{equation}

\begin{theorem}
    Let $u$ be the solution of \eqref{eq:modeldivformvc}, and $u^*$ be obtained
    by the four steps below:
    \begin{enumerate}
        \item Find $ \tilde u \in H^1_\Gamma(\Omega):=\bigl\{w\in H^1(\Omega):\int_\Gamma w=0\bigr\}$, such that
              \begin{equation}\label{eq:divformvcneumann}
                  (\mathcal{A} \nabla  \tilde u ,\mathcal{A} \nabla v)=\langle \tilde f,v\rangle_{H^{-1}(\Omega)\times H^1_0(\Omega)},
                  \ \ \forall\,v\in H^1_\Gamma(\Omega);
              \end{equation}
        \item Find a $\varphi\in H^1(\Omega)$, such that
              \begin{equation}\label{eq:divformvckernelmodi}
                  ({\mathcal{A}^{-1} }\curl\varphi,{\mathcal{A}^{-1} }\curl\psi)
                  =\langle\partial_t (g- \tilde u |_\Gamma),\psi\rangle_\Gamma,\ \forall\,\psi\in H^1(\Omega);
              \end{equation}
        \item  Find a $u_c\in H^1(\Omega)$, such that
              \begin{equation}\label{eq:divformvcopera}
                  (\mathcal{A} \nabla u_c,\mathcal{A} \nabla v)
                  =(\mathcal{A} \nabla  \tilde u - \mathcal{A} ^{-1}\curl\varphi,\mathcal{A} \nabla v),
                  \ \forall\,v\in H^1(\Omega);
              \end{equation}
        \item  Set $u^*=u_c-C$, with $C=\frac{1}{|\gamma|}\int_{\gamma} (u_c-g)$
              for any $\gamma\subset \Gamma$ such that $|\gamma|\neq 0$.
    \end{enumerate}
    Then $u^*=u$.
\end{theorem}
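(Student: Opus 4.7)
The plan is to mirror the four-step argument used in the proof of Theorem 2.1, with the operator pair $(\nabla,\curl)$ replaced throughout by the weighted pair $(\mathcal{A}\nabla,\mathcal{A}^{-1}\curl)$. This pair is dictated by the factorization $-\dv(\mathcal{A}^2\nabla) = (-\dv\mathcal{A})\circ(\mathcal{A}\nabla)$ already noted before \eqref{eq:modeldivformvc}, and it is the adjoint pair naturally underlying that variational form. Each verification in Theorem 2.1 has a direct weighted counterpart, once the coefficient is carried through.

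First I would subtract \eqref{eq:divformvcneumann} from \eqref{eq:modeldivformvc} to get $(\mathcal{A}\nabla(u-\tilde u),\mathcal{A}\nabla v)=0$ for all $v\in H^1_0(\Omega)$, which says that the vector field $\mathcal{A}^2\nabla(u-\tilde u)$ is weakly divergence-free on $\Omega$. Since $\Omega$ is simply connected, the two-dimensional Poincaré lemma for divergence-free fields supplies a stream function $\varphi\in H^1(\Omega)$ with $\mathcal{A}^2\nabla(u-\tilde u)=\curl\varphi$, equivalently $\mathcal{A}\nabla(u-\tilde u)=\mathcal{A}^{-1}\curl\varphi$. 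I would then verify \eqref{eq:divformvckernelmodi}: using the symmetry of $\mathcal{A}$, $(\mathcal{A}^{-1}\curl\varphi,\mathcal{A}^{-1}\curl\psi)=(\mathcal{A}\nabla(u-\tilde u),\mathcal{A}^{-1}\curl\psi)=(\nabla(u-\tilde u),\curl\psi)$, and integration by parts together with $\rot\nabla=0$ collapses this to the boundary pairing $\langle\partial_t(g-\tilde u|_\Gamma),\psi\rangle_\Gamma$, exactly as in the constant-coefficient argument.

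With this identification in hand, Step 3 rewrites as $(\mathcal{A}\nabla(u_c-u),\mathcal{A}\nabla v)=0$ for all $v\in H^1(\Omega)$; testing with $v=u_c-u$ and invoking the coercivity bound \eqref{equation} forces $\nabla(u_c-u)\equiv 0$, so $u_c-u$ is a constant that Step 4 then corrects via the least-squares condition on $\gamma\subset\Gamma$. The principal technical point will be the stream-function step: one must confirm that the weighted divergence-free field $\mathcal{A}^2\nabla(u-\tilde u)$ indeed admits a representation of the form $\curl\varphi$ in the correct function space, and that the pulled-back field $\mathcal{A}^{-1}\curl\varphi$ lives in the space in which \eqref{eq:divformvckernelmodi} is posed. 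Both follow from the coercivity bound \eqref{equation} together with the classical Helmholtz decomposition on simply connected two-dimensional domains, after which the remainder is a verbatim translation of the Poisson proof.
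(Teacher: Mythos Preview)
Your proposal is correct and follows essentially the same route as the paper. The paper's own proof consists of a single sentence --- that the null space of $\dv\circ\mathcal{A}$ coincides with the range of $\mathcal{A}^{-1}\circ\curl$, after which the argument of Theorem~\ref{thm:poisson} carries over verbatim --- and your outline simply unpacks this: you produce the stream function from $\dv(\mathcal{A}^2\nabla(u-\tilde u))=0$, verify \eqref{eq:divformvckernelmodi} by reducing $(\mathcal{A}^{-1}\curl\varphi,\mathcal{A}^{-1}\curl\psi)$ to $(\nabla(u-\tilde u),\curl\psi)$ and integrating by parts, and finish with the constant correction exactly as before.
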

\begin{proof}
    Note that the null space of $\dv\circ\mathcal{A} $ coincides with the range of
    $\mathcal{A}^{-1}\circ\curl$ equipped with proper spaces, and the proof is
    the same as that of Theorem \ref{thm:poisson}.
\end{proof}

\subsection{A simple approach for the interface problem}

We now consider the case that $\mathcal{A} $ is discontinuous. Let $\Gamma_0$ be an
interface that separates
$\overline\Omega=\overline\Omega_1\cup\overline\Omega_2$ with
$\mathring{\Omega_1}\cap\mathring{\Omega_2}=\emptyset$; see Figure
\ref{fig:interface} for an illustration.  We use $\mathbf{n}_i$ and
$\mathbf{t}_i$ for the outer unit normal vector and the corresponding unit
tangential vector for $\partial\Omega_i$, $i=1,2$.
\begin{figure}[htbp]
    \centering
    \includegraphics[width=0.6\textwidth]{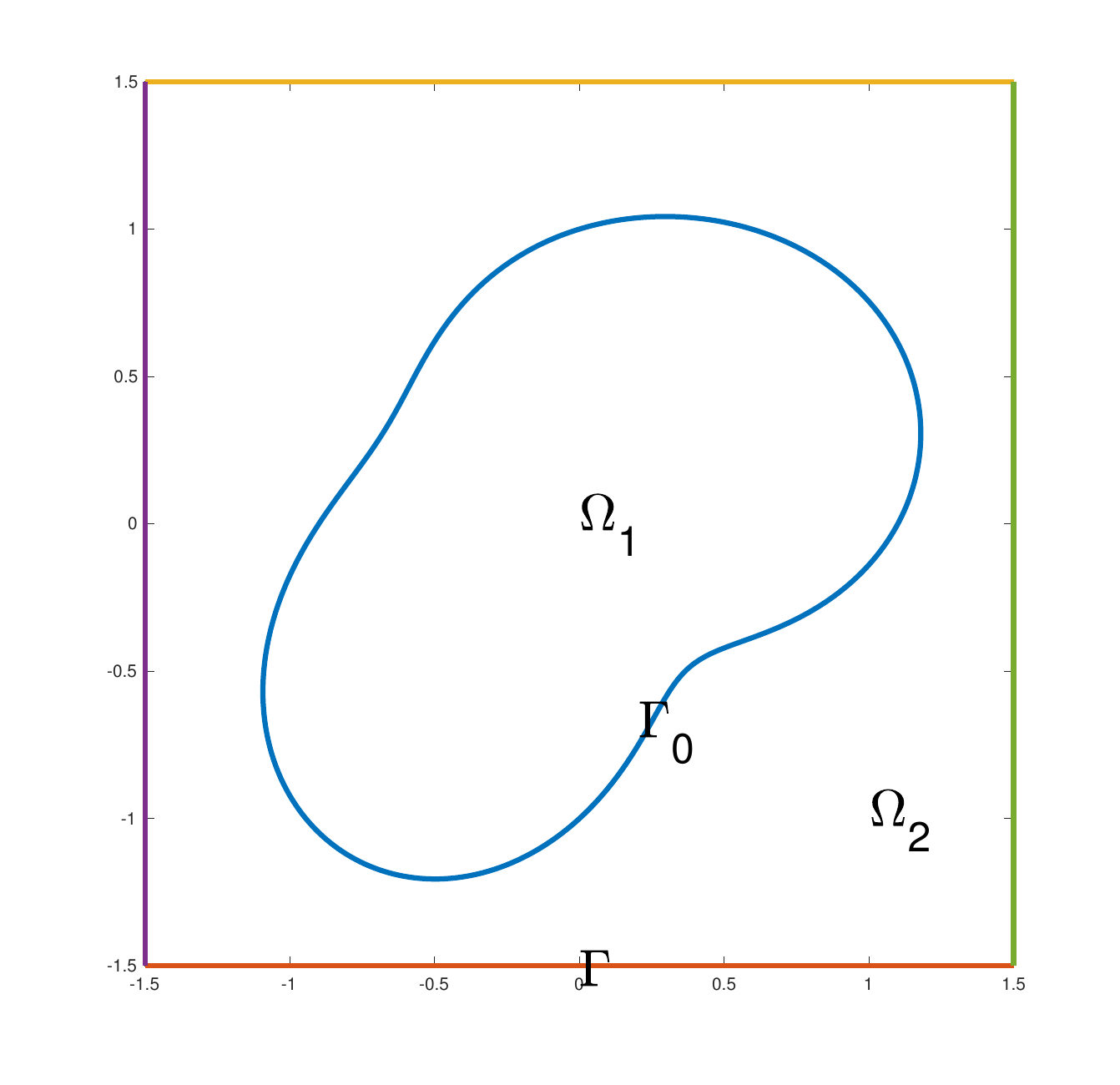}
    \caption{Illustration of the domain and the interface}\label{fig:interface}
\end{figure}

Assume $\mathcal{A} $ to be discontinuous across $\Gamma_0$. We consider the interface problem below:

\begin{equation}
    \left\{
    \begin{array}{rll}
        -\dv\mathcal{A} ^2\nabla u       & =f      & \mbox{in}\,\Omega_1\cup\Omega_2,
        \\
        u                           & =g      & \mbox{on}\,\Gamma,
        \\
        (\mathcal{A} ^2\nabla u)|_{\Omega_1}\cdot\mathbf{n}_1+(\mathcal{A} ^2\nabla u)|_{\Omega_2}\cdot\mathbf{n}_2
                                    & =\ujn   & \mbox{on}\,\Gamma_0,
        \\
        u|_{\Omega_1}-u|_{\Omega_2} & =\ujump & \mbox{on}\,\Gamma_0.
    \end{array}
    \right.
\end{equation}
The variational formulation is to find $u\in H^1(\Omega_1)\times
    H^1(\Omega_2):=\{w\in L^2(\Omega):w|_{\Omega_i}\in H^1(\Omega_i),\ i=1,2\}$,
such that
\begin{equation}\label{eq:modelinterfacial}
    (\mathcal{A} ^2\nabla u,\nabla v)_{\Omega_1\cup\Omega_2}
    =\langle f,v\rangle_{H^{-1}(\Omega)\times H^1_0(\Omega)}+\langle\ujn,v\rangle_{\Gamma_0},\ \ \forall\,v\in H^1_0(\Omega),
\end{equation}
and
\begin{equation} \label{eq:modelinterfacial-bc}
    u|_\Gamma=g,\ \ \mbox{and}\ \ \   u|_{\Omega_1}-u|_{\Omega_2} =\ujump\ \ \text{on } \Gamma_0.
\end{equation}
Here $\langle\cdot,\cdot\rangle_{\Gamma_0}$ is a duality between $H^{-1/2}(\Gamma_0)$ and $H^{1/2}(\Gamma_0)$, which evaluates as the $L^2$ inner product on $\Gamma_0$ for sufficiently smooth functions. 

\begin{theorem}\label{thm:interface} Let $u$ be the solution of
    \eqref{eq:modelinterfacial}-\eqref{eq:modelinterfacial-bc}, and $u^*$ be obtained by the four steps below:
    \begin{enumerate}
        \item Find a $ \tilde u \in H^1(\Omega)$, such that
              \begin{equation}\label{eq:interfacialneumann}
                  (\mathcal{A} \nabla  \tilde u ,\mathcal{A} \nabla v)_\Omega
                  =\langle \tilde f,v-\fint_\Gamma v\rangle+\langle \ujn, v-\fint_\Gamma v\rangle_{\Gamma_0},\ \ \forall\,v\in H^1(\Omega).
              \end{equation}
        \item Find a $\varphi\in H^1(\Omega)$, such that
              \begin{equation}\label{eq:interfacialkernelmodi}
                  ({\mathcal{A}^{-1} }\curl\varphi, {\mathcal{A}^{-1} }\curl\psi)_{\Omega_1\cup \Omega_2}=\langle \partial_{\mathbf{t}_1}\ujump,\psi\rangle_{\Gamma_0}+\langle \partial_\mathbf{t}(g- \tilde u |_{\Gamma}),\psi\rangle_\Gamma,\ \forall\,\psi\in H^1(\Omega).
              \end{equation}
        \item  Find a $u_c\in H^1(\Omega_1)\times H^1(\Omega_2)$, such that
              \begin{equation}\label{eq:interfacialopera}
                  (\mathcal{A} \nabla u_c,\mathcal{A} \nabla v)_{\Omega_i}=(\mathcal{A} \nabla  \tilde u - \mathcal{A} ^{-1}\curl\varphi,\mathcal{A} \nabla v)_{\Omega_i},\ \forall\,v\in H^1(\Omega_i), \, i=1,2.
              \end{equation}
        \item  Set $u^*|_{\Omega_1}=u_c|_{\Omega_1}-C_1$,
              $u^*|_{\Omega_2}=u_c|_{\Omega_2}-C_2$, with
              $C_2=\frac{1}{|\gamma|}\int_{\gamma} (u_c-g)$ for any $\gamma\subset
                  \Gamma$ such that $|\gamma|\neq 0$, and
              $C_1=\frac{1}{|\gamma_0|}\int_{\gamma_0} (u_c|_{\Omega_1} -
                  u^*|_{\Omega_2}-\kappa_1)$ for any $\gamma_0\subset \Gamma_0$ such that
              $|\gamma_0|\neq 0$.
    \end{enumerate}
    Then $u^*=u$.
\end{theorem}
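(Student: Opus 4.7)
The proof will follow the three-step architecture used in Theorem \ref{thm:poisson}, adapted to the piecewise geometry. The first task is to identify the relevant stream-function vector field. Testing \eqref{eq:modelinterfacial} and \eqref{eq:interfacialneumann} against the same $v\in H^1_0(\Omega)$ and noting $\fint_\Gamma v=0$, the $f$- and $\kappa_2$-terms appear identically on both sides and cancel on subtraction, leaving $(\mathcal{A}^2(\nabla u-\nabla\tilde u),\nabla v)_{\Omega_1\cup\Omega_2}=0$ for every such $v$. This is exactly the statement that $\mathbf{F}:=\mathcal{A}^2(\nabla u-\nabla\tilde u)$, viewed as an $L^2(\Omega;\mathbb{R}^2)$ distribution, is divergence-free on the whole $\Omega$: its piecewise divergences vanish and its normal trace has no jump across $\Gamma_0$. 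Since $\Omega$ is simply connected, the scalar de Rham isomorphism produces $\varphi\in H^1(\Omega)$ with $\curl\varphi=-\mathbf{F}$, equivalently $\mathcal{A}^{-1}\curl\varphi=-\mathcal{A}(\nabla u-\nabla\tilde u)$.

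The next task is to verify that this $\varphi$ solves \eqref{eq:interfacialkernelmodi}, and this is where the main obstacle lies. Using the tangential integration by parts identity $(\nabla w,\curl\psi)_{\Omega_i}=-\langle\partial_{\mathbf{t}_i}w,\psi\rangle_{\partial\Omega_i}$ on the closed curve $\partial\Omega_i$ (applied to $w=u|_{\Omega_i}-\tilde u$ for $i=1,2$ and summed), the contributions on $\Gamma$ collapse to $\langle\partial_{\mathbf{t}}(g-\tilde u|_\Gamma),\psi\rangle_\Gamma$, because $u|_\Gamma=g$ and the outward orientations align. On $\Gamma_0$ the sign bookkeeping is delicate: since $\mathbf{t}_2=-\mathbf{t}_1$, the $\tilde u$-traces cancel (by continuity of $\tilde u\in H^1(\Omega)$), while the two $u$-traces combine into $\partial_{\mathbf{t}_1}(u|_{\Omega_1}-u|_{\Omega_2})=\partial_{\mathbf{t}_1}\kappa_1$, reproducing the remaining term in \eqref{eq:interfacialkernelmodi}. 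This orientation accounting on $\Gamma_0$ is the piece I expect to require the most care.

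Finally, substituting $\curl\varphi=\mathcal{A}^2(\nabla\tilde u-\nabla u)$ into the right-hand side of \eqref{eq:interfacialopera} reduces it to $(\mathcal{A}\nabla u,\mathcal{A}\nabla v)_{\Omega_i}$, so $u_c-u$ lies in the kernel of $\mathcal{A}\nabla$ on each $\Omega_i$ and is therefore constant on each subdomain. The two correction constants $C_1,C_2$ supplied by Step 4 are exactly what is needed to enforce $u^*=g$ on a sample arc $\gamma\subset\Gamma$ (via $C_2$) and $u^*|_{\Omega_1}-u^*|_{\Omega_2}=\kappa_1$ on a sample arc $\gamma_0\subset\Gamma_0$ (via $C_1$); these two scalar conditions uniquely pin down the two free constants, yielding $u^*=u$ on $\Omega_1\cup\Omega_2$. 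As in Theorem \ref{thm:poisson}, the non-uniqueness of $\varphi$ modulo an additive constant is harmless because $\curl$ annihilates constants.
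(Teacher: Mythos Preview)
Your proposal is correct and follows essentially the same route as the paper's proof: use Step~1 to show $\mathcal{A}^2\nabla(u-\tilde u)$ is globally divergence-free, invoke the scalar stream function $\varphi$, verify it solves \eqref{eq:interfacialkernelmodi} by piecewise integration by parts and summation (with the $\Gamma_0$ contributions collapsing to $\partial_{\mathbf{t}_1}\kappa_1$ via $\mathbf{t}_2=-\mathbf{t}_1$), and then conclude. Your treatment is in fact more explicit than the paper's, which compresses Steps~3--4 into a single ``the assertion follows immediately''; your remark that $\varphi$ is determined only up to an additive constant (harmless under $\curl$) and that the two piecewise constants in $u_c-u$ are fixed precisely by the averages $C_1,C_2$ makes the closing step rigorous.
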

\begin{proof}
    By the first item, $(\mathcal{A} ^2\nabla(u- \tilde u ),\nabla v)=0$ for $v\in
        H^1_0(\Omega)$, therefore, there exists $\varphi\in H^1(\Omega)$ such that
    $\mathcal{A} \nabla(u- \tilde u )=\mathcal{A} ^{-1}\curl \varphi$. It follows that
    $\rot\,\mathcal{A} ^{-2}\curl\varphi=0$. Then
    \begin{align*}
        ({\mathcal{A}^{-1} }\curl\varphi, {\mathcal{A}^{-1} }\curl\psi)_{\Omega_i}
         & =\langle \mathcal{A} ^{-2}\curl\varphi\cdot\mathbf{t}_i,\psi\rangle_{\partial\Omega_i}                                                                                                         \\
         & =\langle \nabla(u- \tilde u )\cdot\mathbf{t}_i,\psi\rangle_{\partial\Omega_i}=\langle \partial_{\mathbf{t}_i}(u|_{\partial\Omega_i}- \tilde u |_{\partial\Omega_i}),\psi\rangle_{\partial\Omega_i},
    \end{align*}
    for any $\psi\in H^1(\Omega)$, and further
    \begin{align*}
        ( {\mathcal{A}^{-1} }\curl\varphi, {\mathcal{A}^{-1} }\curl\psi)_{\Omega_1\cup \Omega_2}
         & =\sum_{i=1}^2\langle \partial_{\mathbf{t}_i}(u|_{\partial\Omega_i}- \tilde u |_{\partial\Omega_i}),\psi\rangle_{\partial\Omega_i}     \\
         & =\langle \partial_{\mathbf{t}_1}\ujump,\psi\rangle_{\Gamma_0}+\langle \partial_\mathbf{t}(g- \tilde u |_{\Gamma}),\psi\rangle_\Gamma.
    \end{align*}
    The assertion follows immediately.
\end{proof}

\begin{remark}
Again, it is helpful to understand the procedure by figuring out the respective strong forms related to equations
    \eqref{eq:interfacialneumann}-\eqref{eq:interfacialopera}.
    \begin{enumerate}
        \item By using integration by parts, we obtain the strong form of
              \eqref{eq:interfacialneumann}:
              \begin{equation}
                \label{eq:InterfaceS1}
                  \left\{
                  \begin{array}{rll}
                      -\nabla \cdot (\mathcal{A}^2 \nabla \tilde u)          & =f,                & \mbox{in}\,\Omega_1\cup \Omega_2,
                      \\
                      \mathbf{n}_1\cdot (\mathcal{A} ^2\nabla \tilde u)|_{\Omega_1}
                      + \mathbf{n}_2 \cdot (\mathcal{A} ^2\nabla \tilde u)|_{\Omega_2}
                                                                   & =\ujn,             & \mbox{on}\,\Gamma_0,
                      \\
                      \mathbf{n}\cdot (\mathcal{A}^2 \partial \tilde u )     & =
                      - \frac{1}{|\Gamma|} \left( \langle f, 1 \rangle_{\Omega}
                      + \langle k_2, 1 \rangle_{\Gamma_0} \right), & \mbox{on}\,\Gamma.
                  \end{array}
                  \right.
              \end{equation}
        \item The boundary value problem corresponding to
              \eqref{eq:interfacialkernelmodi} is:
              \begin{equation}
                \label{eq:InterfaceS2}
                  \left\{
                  \begin{array}{rll}
                      -\rot (\mathcal{A}^{-2} \curl\varphi) & =0                                 & \mbox{in}\,\Omega_1\cup \Omega_2,
                      \\
                      \mathbf{t}_1\cdot(\mathcal{A}^{-2}\curl\varphi)|_{\Omega_1} + \mathbf{t}_2\cdot(\mathcal{A}^{-2}\curl\varphi)|_{\Omega_2}
                                                               & =\partial_{\mathbf{t}_1} \kappa_1,
                                                               & \mbox{on}\,\Gamma_0,                                                   \\
                      \mathbf{t}\cdot(\mathcal{A}^{-2}\curl\varphi) & =\partial_\mathbf{t} (g- \tilde u),
                                                               & \mbox{on}\,\Gamma.
                  \end{array}
                  \right.
              \end{equation}
        \item  The boundary value problem corresponding to
              \eqref{eq:interfacialopera} is:
              \begin{equation}
                \label{eq:InterfaceS3}
                  \left\{
                  \begin{array}{rll}
                      -\nabla\cdot (\mathcal{A}^2 \nabla u_c)                                                 & =f,                          & \mbox{in}\,\Omega_i,
                      \\
                      \mathbf{n}_i \cdot (\mathcal{A}^2 \nabla u_c) - \mathbf{n}_i\cdot (\mathcal{A}^2 \nabla \tilde u) & =
                      - \partial_\mathbf{t}\varphi,                                                      & \mbox{on}\,\partial\Omega_i,
                  \end{array}
                  \right.
            \end{equation}
              for $i=1,2$.
    \end{enumerate}
    From these strong forms, we see clearly that $\tilde u$ and $\phi$ are
    both continuous functions but with derivative jumps on interface $\Gamma_0$. This softens the jumps between $u_c|_{\Omega_1}$ and $u_c|_{\Omega_2}$ across $\Gamma_0$ on both function value and derivative jumps.
\end{remark}

\section{Natural deep Ritz methods}
\label{sec:ndrm}

%
%\subsection{Error estimation}

\subsection{Natural deep Ritz method for Poisson equations  with Dirichlet boundary conditions}
Note that the three equations
\eqref{eq:poissonneumann}-\eqref{eq:poissonkernelmodi}-\eqref{eq:poissonopera}
in weak forms correspond to three elliptic equations with Neumann boundary
conditions \eqref{eq:NDB1}-\eqref{eq:NDB2}-\eqref{eq:NDB3}, which can be efficiently solved using Deep Ritz method without boundary penalty. Details are given in the following three steps. As usual, we use $\Phi_{NN}(d,1)$ for the set of neural network functions outputting a 1-dim vector with a d-dim input vector.
\begin{enumerate}
  \item Find $u_1 \in \Phi_{N N} (d, 1) /\mathbb{R}$ by optimizing
  \begin{equation}
    \mathcal{L}_1 (u_1) : = \Bigl[ \sum_{\{ x_j, \omega_j \} \in \mathcal{D}}
    \frac{1}{2} | \nabla u_1 (x_j) |^2 \omega_j - f (x_j) (u_1 (x_j) - c_1)
    \omega_j \Bigr]
    + c_1^2, \label{eq:opt1}
  \end{equation}
  where $c_1 = \frac{1}{| \mathcal{D}_{\Gamma} |} \sum_{\{ x_j, \omega_j \}
  \in \mathcal{D}_{\Gamma}} u_1 (x_j) \omega_j$. $\mathcal{D}$ and
  $\mathcal{D}_{\Gamma}$ are the set of quadrature points and weights for
  domain $\Omega$ and its boundary $\Gamma$. Hereby, the term $c_1^2$ is added in the objective function to make the
  solution unique.

  \item Find $\varphi \in \Phi_{N N} (d, 1)$ by optimizing
  \begin{align}
    \mathcal{L}_2 (\varphi)
    := & \sum_{ \{ x_j, \omega_j\}\ \in \mathcal{D}} \frac{1}{2}
    [\curl \varphi (x_j)]^2  \omega_j  %\nonumber\\
     + \sum_{ \{ x_j, \omega_j\} \in \mathcal{D}_\Gamma
    } \bigl[ g (x_j) \partial_{\mathbf{\tau}} \varphi
    (x_j) + \partial_{\mathbf{\tau}} u_1 (x_j) \varphi (x_j) \bigr] \omega_j
    \nonumber \\
    &+ \Bigl[ \sum_{ \{x_j, \omega_j\} \in \mathcal{D}_\Gamma}
    \varphi(x_j)\omega_j \Bigr]^2. \label{eq:opt2}
  \end{align}
Again, the last term is added to make the solution unique.

  \item Find the solution $u_c\in \Phi_{N N} (d, 1)$ by minimizing:
  \begin{align}
    \mathcal{L}_3(u_c)
    :=
    & \sum_{ \{ x_j, \omega_j \} \in
    \mathcal{D}} \bigl| \nabla u_c (x_j) - \nabla u_1 (x_j) + \curl
    \varphi (x_j) \bigr|^2\omega_j \nonumber\\
    & + \Bigl[\sum_{ \{ x_j, \omega_j \} \in
    \mathcal{D}_\Gamma} ( u_c(x_j) - g(x_j) ) \omega_j \Bigr]^2.
    \label{eq:opt3}
  \end{align}
  The last term is a regularization term to make the integration of $u_c$
  and $g$ on boundary $\partial\Omega$ equal to each other. Then, $u_c$ is a
  proper numerical approximation of $u$.
\end{enumerate}

One may optimize the three equation \eqref{eq:opt1}-\eqref{eq:opt3} one by one,
or optimize $\mathcal{L}_1 + \mathcal{L}_2 +
\mathcal{L}_3$ all in one. To make the training procedure simpler, we take the latter approach in this paper.

The variable coefficient systems
\eqref{eq:interfacialneumann}-\eqref{eq:interfacialkernelmodi}-\eqref{eq:interfacialopera}
can be solved similarly by the proposed natural deep Ritz method. We omit the details to save space.

\subsection{Natural deep Ritz method for elliptic interface problems}

For inteface problem defined in
\eqref{eq:interfacialneumann}-\eqref{eq:interfacialkernelmodi}-\eqref{eq:interfacialopera},
it is more involved to design an efficient deep Ritz method.
We will use similar approach as in the Poisson equations cases to solve
\eqref{eq:interfacialneumann}-\eqref{eq:interfacialkernelmodi}, since both $u_1$
and $\varphi$ has no jump of function values on the interface $\Gamma_0$. We use two neural network functions to represent $u_c$, since it contains jumps on the
interface $\Gamma_0$.
We will
solve \eqref{eq:interfacialopera} with two neural networks (or one neural
network with two outputs  $\Phi_{NN}(d,2)$), one for each
subdomain $\Omega_{i}, i=1, 2$. The details are given below.
\begin{enumerate}
    \item Find $u_1 \in \Phi_{N N} (d, 1) /\mathbb{R}$ by optimizing
    \begin{align}
      \mathcal{L}_1 (u_1)
      : = & \Bigl[ \sum_{\{ x_j, \omega_j \} \in \mathcal{D}}
      \frac{1}{2} | \nabla u_1 (x_j) |^2 \mathcal{A}^2(x_j)\omega_j
      - f (x_j) (u_1 (x_j) - c_1) \omega_j  \Bigr] \\
      & - \Bigl[ \sum_{\{ x_j, \omega_j \} \in \mathcal{D}_{\Gamma_0}}
      \kappa_2 (x_j) (u_1 (x_j) - c_1) \omega_j \Bigr]
      + c_1^2, \label{eq:opt1-interface}
    \end{align}
    where $c_1 = \frac{1}{| \mathcal{D}_{\Gamma} |} \sum_{\{ x_j, \omega_j \}
    \in \mathcal{D}_{\Gamma}} u_1 (x_j) \omega_j$. $\mathcal{D}$,
    $\mathcal{D}_{\Gamma}$ and $\mathcal{D}_{\Gamma_0}$ are the set of quadrature points and weights for
    domain $\Omega$, boundary $\Gamma$ and interface $\Gamma_0$.

    \item Find $\varphi \in \Phi_{N N} (d, 1)$ by optimizing
    \begin{align}
      \mathcal{L}_2 (\varphi)
      := & \sum_{ \{ x_j, \omega_j\}\ \in \mathcal{D}}
      \frac{1}{2}
      [\curl \varphi (x_j)]^2 \mathcal{A}^{-2}(x_j) \omega_j  \nonumber\\
       & + \sum_{ \{ x_j, \omega_j\} \in \mathcal{D}_\Gamma
      } \bigl[ g (x_j) \partial_{\mathbf{\tau}} \varphi
      (x_j) + \partial_{\mathbf{\tau}} u_1 (x_j) \varphi (x_j) \bigr] \omega_j \nonumber\\
      & + \Bigl[ \sum_{ \{ x_j, \omega_j\} \in \mathcal{D}_{\Gamma_0}
      }  \kappa_1 (x_j) \partial_{\mathbf{\tau}} \varphi
      (x_j) \omega_j \Bigr]
      + \Bigl[ \sum_{ \{x_j, \omega_j\} \in \mathcal{D}_\Gamma}
      \varphi(x_j)\omega_j \Bigr]^2. \label{eq:opt2-interface}
    \end{align}
    Note that the last term is added to make the solution unique.

    \item Find the solution $(u^c_1, u^c_2)\in \Phi_{N N} (d, 2)$ by minimizing:
    \begin{align}
      \mathcal{L}_3(u_c)
      :=
      & \sum_{i=1,2}\sum_{ \{ x_j, \omega_j \} \in
      \mathcal{D}_i} \bigl| \mathcal{A} \nabla (u^c_i (x_j) - u_1 (x_j) ) + \mathcal{A}^{-1} \curl
      \varphi (x_j) \bigr|^2\omega_j
      \nonumber\\
      & + \Bigl[\sum_{ \{ x_j, \omega_j \} \in
      \mathcal{D}_\Gamma} ( u^c_2(x_j) - g(x_j) ) \omega_j \Bigr]^2 \nonumber\\
      & + \Bigl[\sum_{ \{ x_j, \omega_j \} \in
      \mathcal{D}_{\Gamma_0}} ( u^c_1(x_j) - u^c_2(x_j) - \kappa_1(x_j) ) \omega_j \Bigr]^2.
      \label{eq:opt3-interface}
    \end{align}
    The last term is a regularization term to make the integration of $u_c$
    and $g$ on boundary $\partial\Omega$ are equal to each other, such that $u_c$ is a
    proper numerical approximation of $u$.
  \end{enumerate}

  Again, we will optimize $\mathcal{L}_1 + \mathcal{L}_2 +
  \mathcal{L}_3$ all in once.

\section{Numerical experiments}
\label{sec:ne}

We take $\Omega = [-1, 1]^2$, and test following examples with given exact
solutions.
\begin{itemize}
  \item {{Example 1}}: A Poisson Dirichlet boundary value problem
  \eqref{eq:PoissonStrong}  with the exact solution
  \begin{equation}\label{eq:ex1}
      u(x) = x_1^2+x_2^2 + sin(x+y)
    \end{equation}

  \item {{Example 2}}: An elliptic equation with smooth variable coefficients and
  Dirichlet boundary condition \eqref{eq:varc-strongform}.
  The variable coefficient matrix is taken as
  \begin{equation}
    \label{eq:ex2coefA}
    \mathcal{A} = \begin{pmatrix}
      1+x_1^2, & 0 \\
      0, & 1
    \end{pmatrix}.
  \end{equation}
  The exact solution is given as
  \begin{equation}\label{eq:ex2}
      u(x) = \displaystyle e^{\cos \bigl( x_1 + x_2^2 \bigr)} .
    \end{equation}

    \item {{Example 3}}: An elliptic equation with non-smooth variable coefficients and
  Dirichlet boundary condition \eqref{eq:varc-strongform}.
  The variable coefficient matrix is taken as
  \begin{equation}
    \label{eq:ex3coefA}
    \mathcal{A} = \begin{pmatrix}
      1+x_1^2, & 0 \\
      0, & 1+|x_2|
    \end{pmatrix}.
  \end{equation}
  The exact solution is taken as
  \begin{equation}\label{eq:ex3}
      u(x) = \displaystyle e^{\cos \bigl( x_1 + |x_2|^3 \bigr)} .
    \end{equation}

    \item {Example 4}: An elliptic equation
    \eqref{eq:varc-strongform} with discontinuous
    coefficients.
  The coefficient matrix is taken as
  \begin{equation}
    \label{eq:ex4coefA}
    \mathcal{A} = \begin{pmatrix}
        1, & 0 \\
        0, & \frac43-\frac23\text{sgn}(x_2)
      \end{pmatrix}.
  \end{equation}
  The exact solution is given as
  \begin{equation}\label{eq:ex4}
      u(x) = e^{\cos(x_1+x_2)} + \frac12 e^{\cos(x_1+|x_2|)}.
    \end{equation}

    \item {{Example 5}}: An interface problem
    \eqref{eq:InterfaceS1}-\eqref{eq:InterfaceS3}. The domain $\Omega_1=[-0.5,
    0.5]^2$, $\Omega_2 = \Omega\backslash\Omega_1$.
  The coefficient matrices are taken as
  \begin{equation}
    \label{eq:ex5coefA}
    \mathcal{A}|_{\Omega_1} = \begin{pmatrix}
      10, & 0 \\
      0, & 10
    \end{pmatrix},
    \quad
    \mathcal{A}|_{\Omega_2} = \begin{pmatrix}
        1+x_1^2, & 0 \\
        0, & 1
      \end{pmatrix}.
  \end{equation}
  The exact solution is given as
  \begin{equation}\label{eq:ex5}
      u(x)|_{\Omega_1} = 5 \displaystyle e^{- ( x_1^2 + x_2^2
      )} ,
      \quad
      u(x)|_{\Omega_2} = 4\displaystyle e^{\cos \bigl( x_1^2/2 + x_2^2
      \bigr)-1}.
    \end{equation}

\end{itemize}

The implementation is conducted using PyTorch \cite{paszke_pytorch_2019}. We employ ResNets with five ResBlock layers, where each ResBlock is a shallow network comprising 20, 35, and 35 hidden units for the proposed method, the standard deep Ritz method, and the PINN method, respectively, ensuring comparable parameter counts across these methods. We test activation functions ReCUr, Tanh, and ReQUr, with further details on ResNet structures and RePUr activations available in \cite{he_deep_2016, li_better_2020, yu_onsagernet_2021a, tian_deep_2024}.

For training data, we generate 10,000 inner and boundary points using composite Gaussian quadrature of order 5. An additional 10,000 points on a uniform grid serve as test data. The Adam optimizer is initially used to train the model for 100 epochs, with a batch size of 200 and an initial learning rate of 0.005. A learning rate scheduler, CosineAnnealingLR \cite{loshchilov_sgdr_2017}, is employed to adjust the learning rate. Following the Adam training phase, we further refine the model using the L-BFGS optimizer for 50 steps, with a history size of 100 and 60 inner iterations per step.

Results from both the proposed method and the standard deep Ritz method (using a boundary penalty constant $\beta=1000$) for these examples are presented in the following figures.
We have several observations:
\begin{itemize}
    \item By comparing \ref{fig:ex1-ndb} and \ref{fig:ex1-drm}, we observe that in the deep Ritz method, the boundary condition is quickly learned due to the large penalty constant $\beta=1000$. However, while this high penalty facilitates rapid boundary condition learning, it significantly slows the convergence of the solution within the domain, particularly for the Tanh network. We attribute this to the increased optimization complexity introduced by the large penalty. In contrast, the new penalty-free method learns both boundary values and the entire function inside the computational domain simultaneously, achieving a more accurate numerical solution with the same number of training points and training steps.
    \item The results for the two variable-coefficient cases, Examples 2 and 3, are comparable. The non-smoothness in the diffusion coefficients does not introduce significant additional difficulty, as the numerical error in the non-smooth coefficient case is only slightly higher than in the smooth coefficient case. Additionally, we observe that ReCUr DNNs outperform those using the other two activation functions, with RePUr DNNs demonstrating faster feature learning than Tanh networks, particularly during the Adam training phase.
    \item During training with Adam, ReQUr networks typically perform well; however, their performance occasionally degrades during the LBFGS training steps, particularly when compared to ReCUr, RePU4r, and Tanh networks. This may be due to the LBFGS method’s reliance on high-order information, and the higher-order derivatives of ReQUr networks are less smooth than those of the other networks.
    
    In the PINN results for Example 4 (Fig. \ref{fig:ex4-pinn}), we observe that PINN performs poorly in this case compared to the proposed natural deep Ritz method. This discrepancy arises because the exact solution is not a strong solution, while the PINN method depends on the strong form of the equation.
    \item For the interface problem in Example 5, the results in Fig. \ref{fig:ex5train-ndb} and \ref{fig:ex5-ndb} indicate that the proposed natural deep Ritz method can efficiently solve the interface problem without requiring penalty parameter tuning for boundary conditions and interface jump conditions. The relative $L^2$ error is approximately $2.5 \times 10^{-3}$, and the relative $L^\infty$ error for this test is around $5 \times 10^{-3}$. However, the LBFGS optimizer is less efficient in this case than in previous examples. Developing more efficient training methods for interface problems warrants further investigation.
\end{itemize}

\begin{figure}
	\centering
    \includegraphics[width=0.8\textwidth]{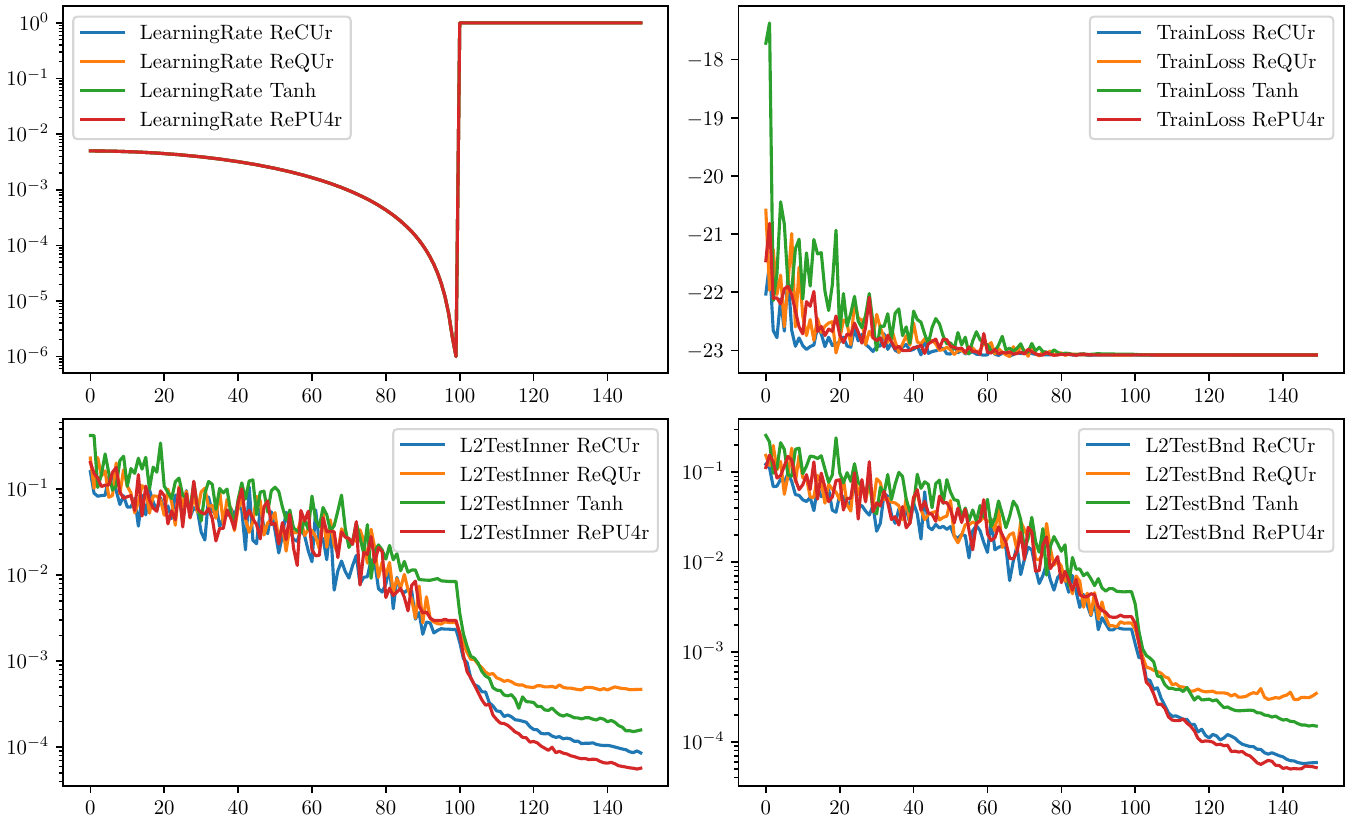}
    \caption{\label{fig:ex1train-ndb} Training results :
    The learning rate (top-left), training loss (top-right)
    $L2$ Testing error on $\Omega$ (bottom-left) and boundary $\Gamma$
    (bottom-right) for Example 1 using New method}
\end{figure}
\begin{figure}
	\centering
    \includegraphics[width=0.8\textwidth]{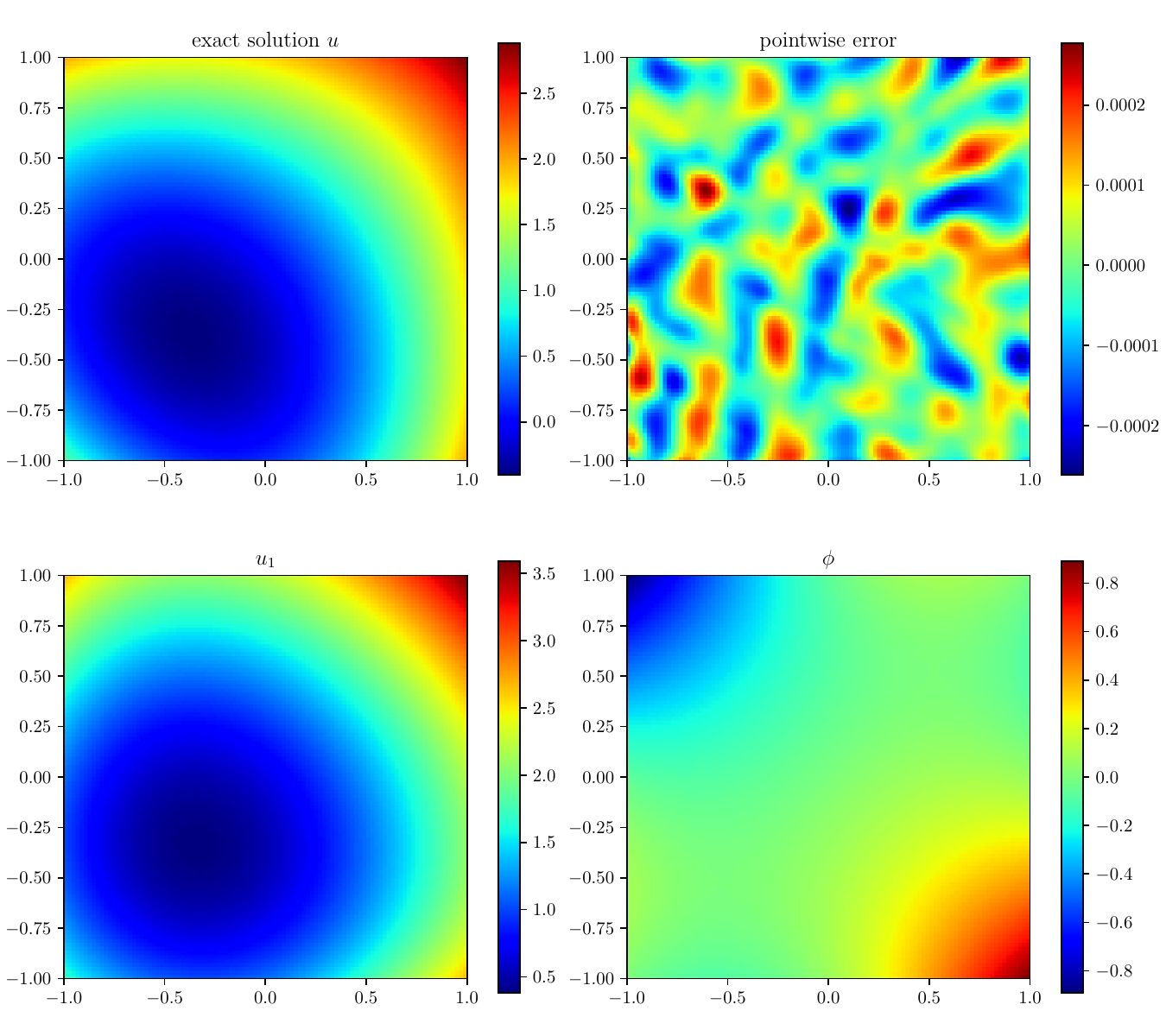}
    \caption{\label{fig:ex1-ndb} The exact solution and
    learned solution for Example 1 using New method}
\end{figure}

\begin{figure}
	\centering
    \includegraphics[width=0.8\textwidth]{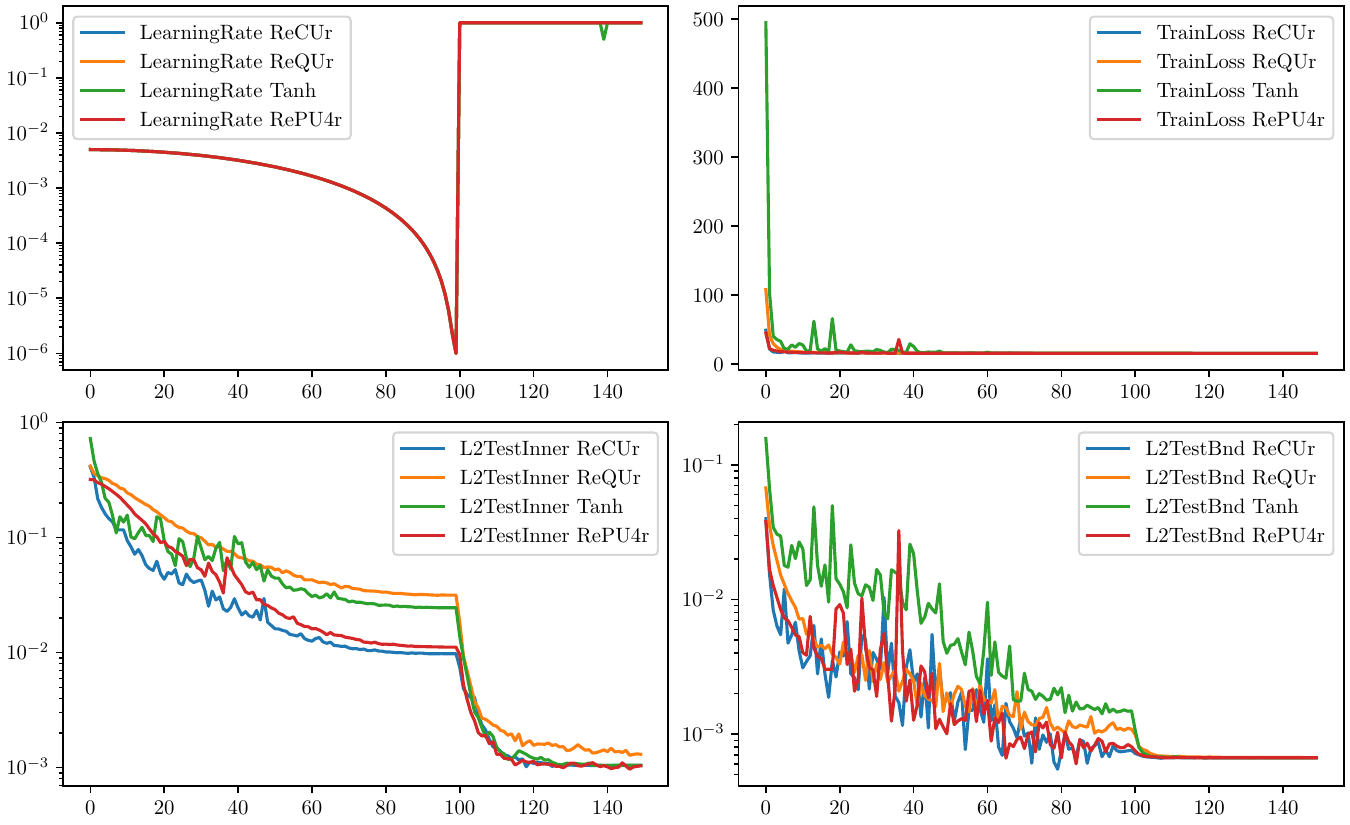}
    \caption{\label{fig:ex1train-drm} Training results :
    The learning rate (top-left), training loss (top-right)
    $L2$ Testing error on $\Omega$ (bottom-left) and boundary $\Gamma$
    (bottom-right) for Example 1 using Deep Ritz method}
\end{figure}
\begin{figure}
	\centering
    \includegraphics[width=0.8\textwidth]{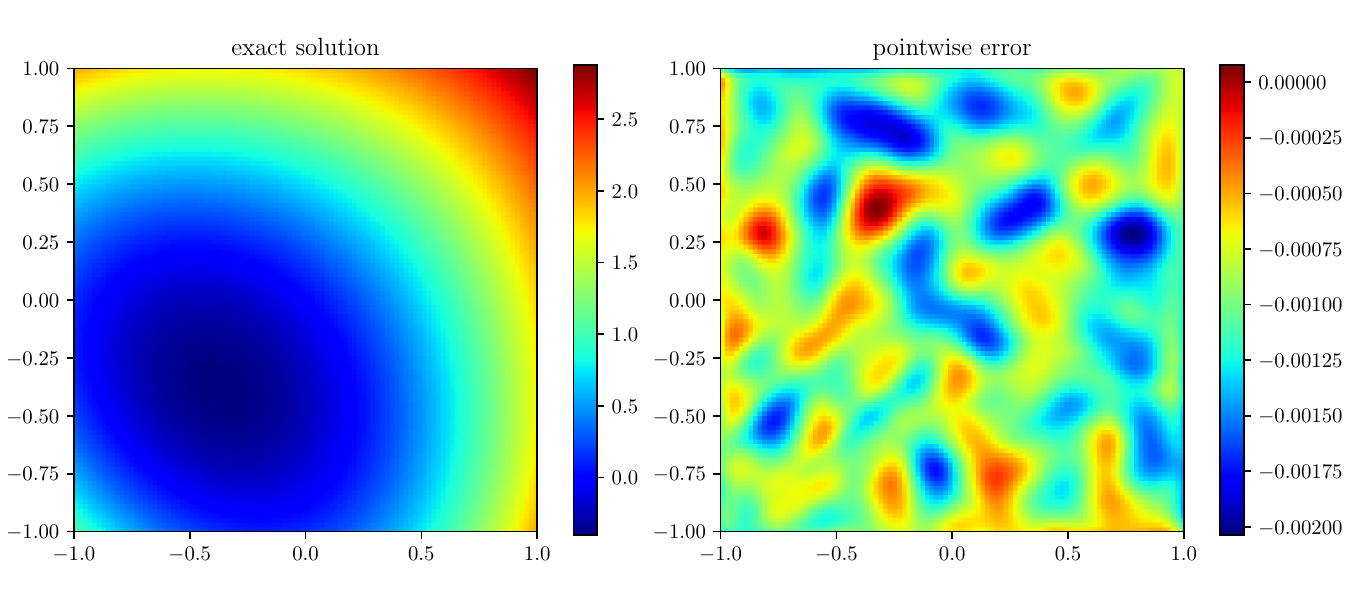}
    \caption{\label{fig:ex1-drm} The exact solution and
    learned solution for for Example 1 using Deep Ritz method}
\end{figure}

\begin{figure}
	\centering
    \includegraphics[width=0.8\textwidth]{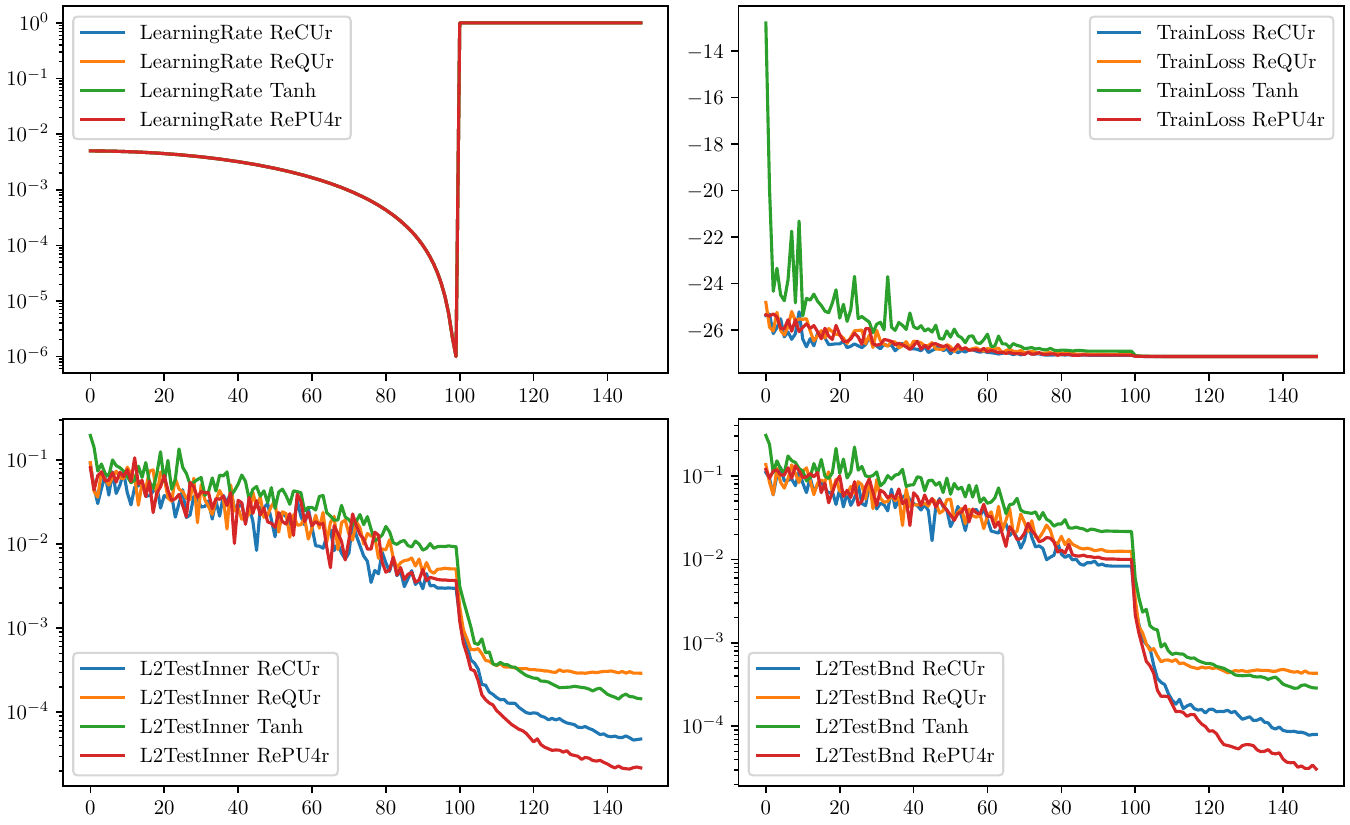}
    \caption{\label{fig:ex2train-ndb} Training results :
    The learning rate (top-left), training loss (top-right)
    $L2$ Testing error on $\Omega$ (bottom-left) and boundary $\Gamma$
    (bottom-right) for Example 2 using New method}
\end{figure}
\begin{figure}
	\centering
    \includegraphics[width=0.8\textwidth]{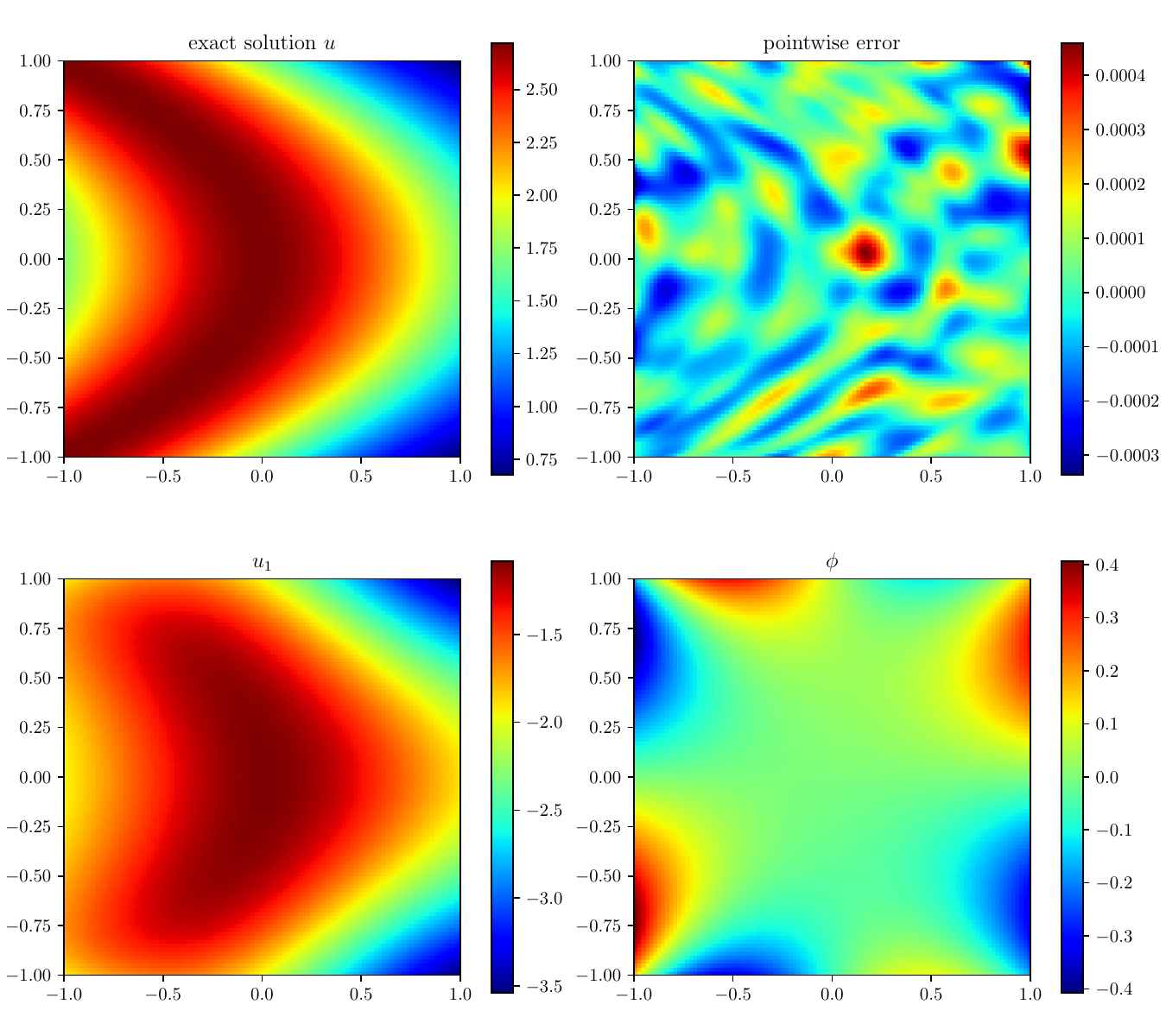}
    \caption{\label{fig:ex2-ndb} The exact solution and
    learned solution using RePUr neural networks for Example 2 using New method}
\end{figure}

\begin{figure}
	\centering
    \includegraphics[width=0.8\textwidth]{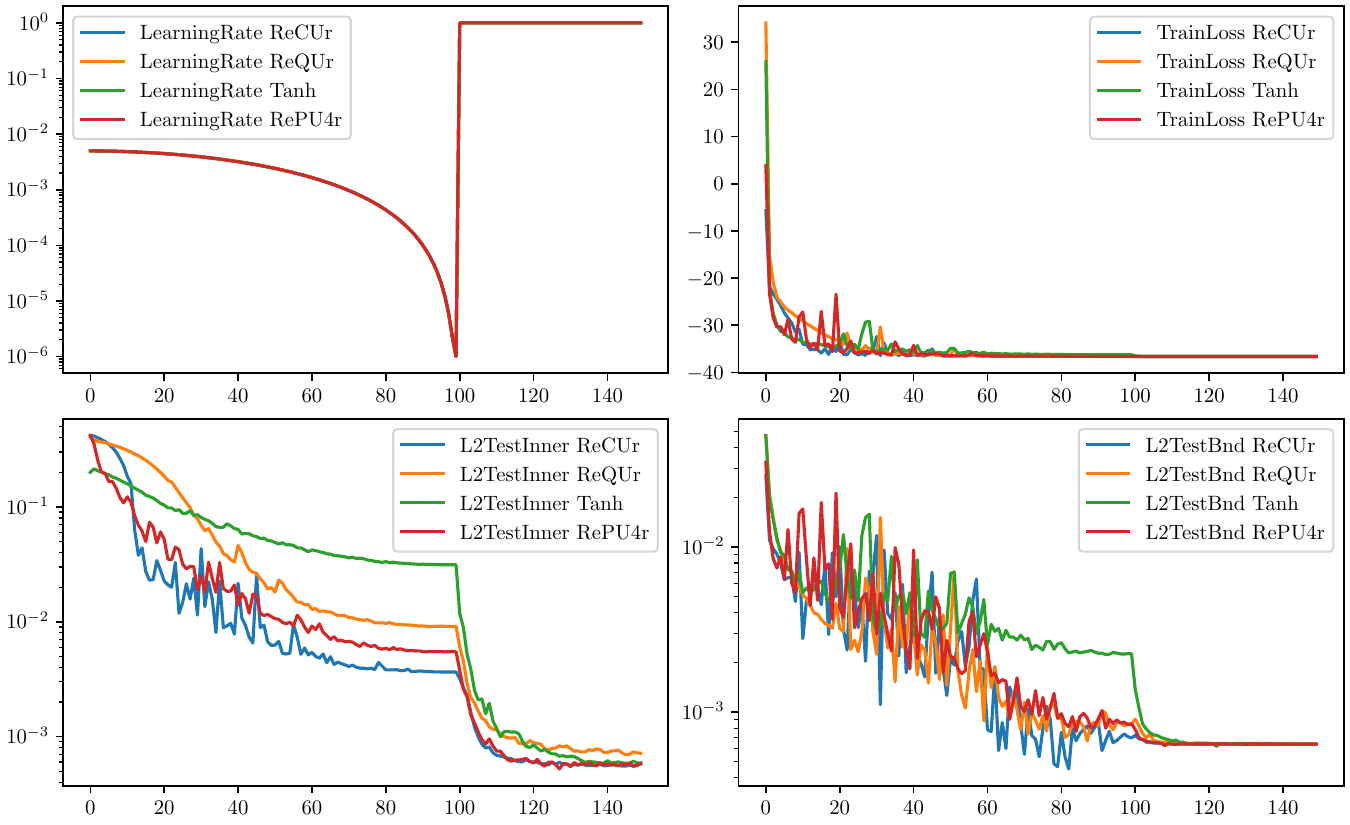}
    \caption{\label{fig:ex2train-drm} Training results :
    The learning rate (top-left), training loss (top-right)
    $L2$ Testing error on $\Omega$ (bottom-left) and boundary $\Gamma$
    (bottom-right) for Example 2 using Deep Ritz method}
\end{figure}
\begin{figure}
	\centering
    \includegraphics[width=0.8\textwidth]{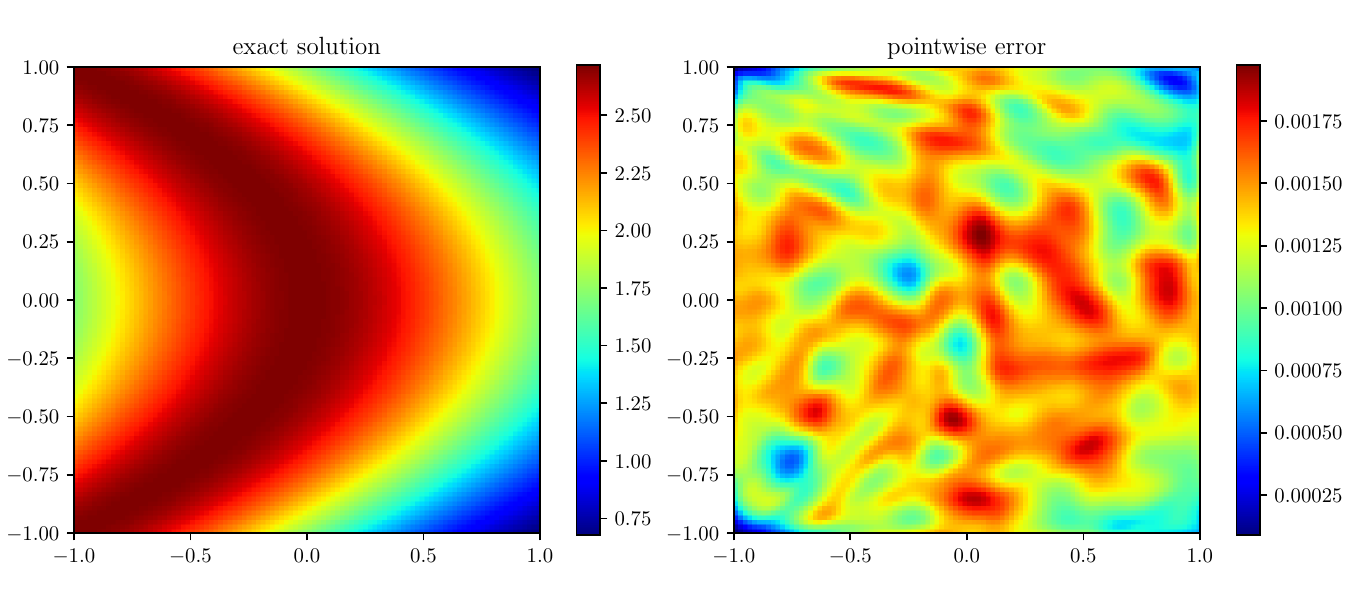}
    \caption{\label{fig:ex2-drm} The exact solution and
    learned solution using RePUr neural networks for Example 2 using Deep Ritz method}
\end{figure}

\begin{figure}
	\centering
    \includegraphics[width=0.8\textwidth]{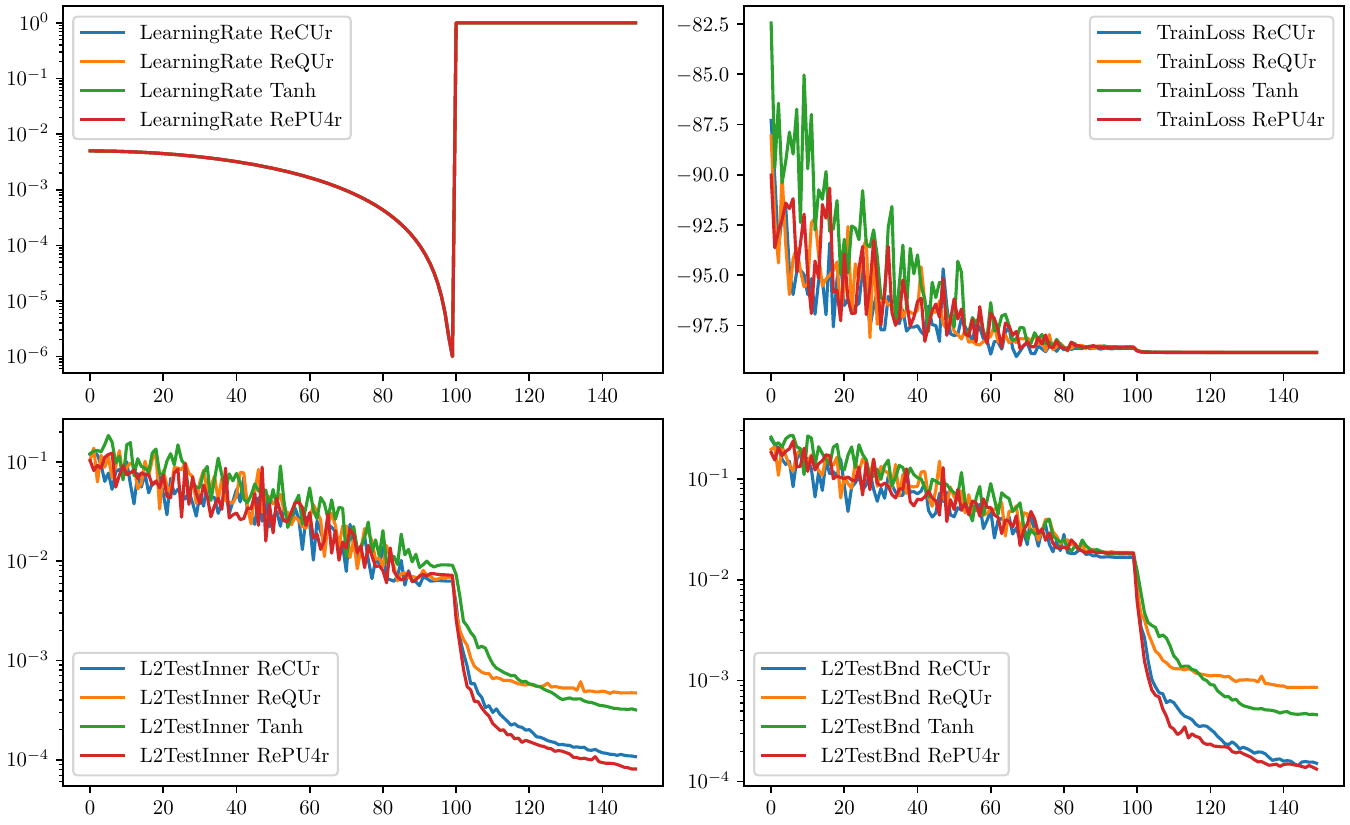}
    \caption{\label{fig:ex3train-ndb} Training results :
    The learning rate (top-left), training loss (top-right)
    $L2$ Testing error on $\Omega$ (bottom-left) and boundary $\Gamma$
    (bottom-right) for Example 3 using New method}
\end{figure}
\begin{figure}
	\centering
    \includegraphics[width=0.8\textwidth]{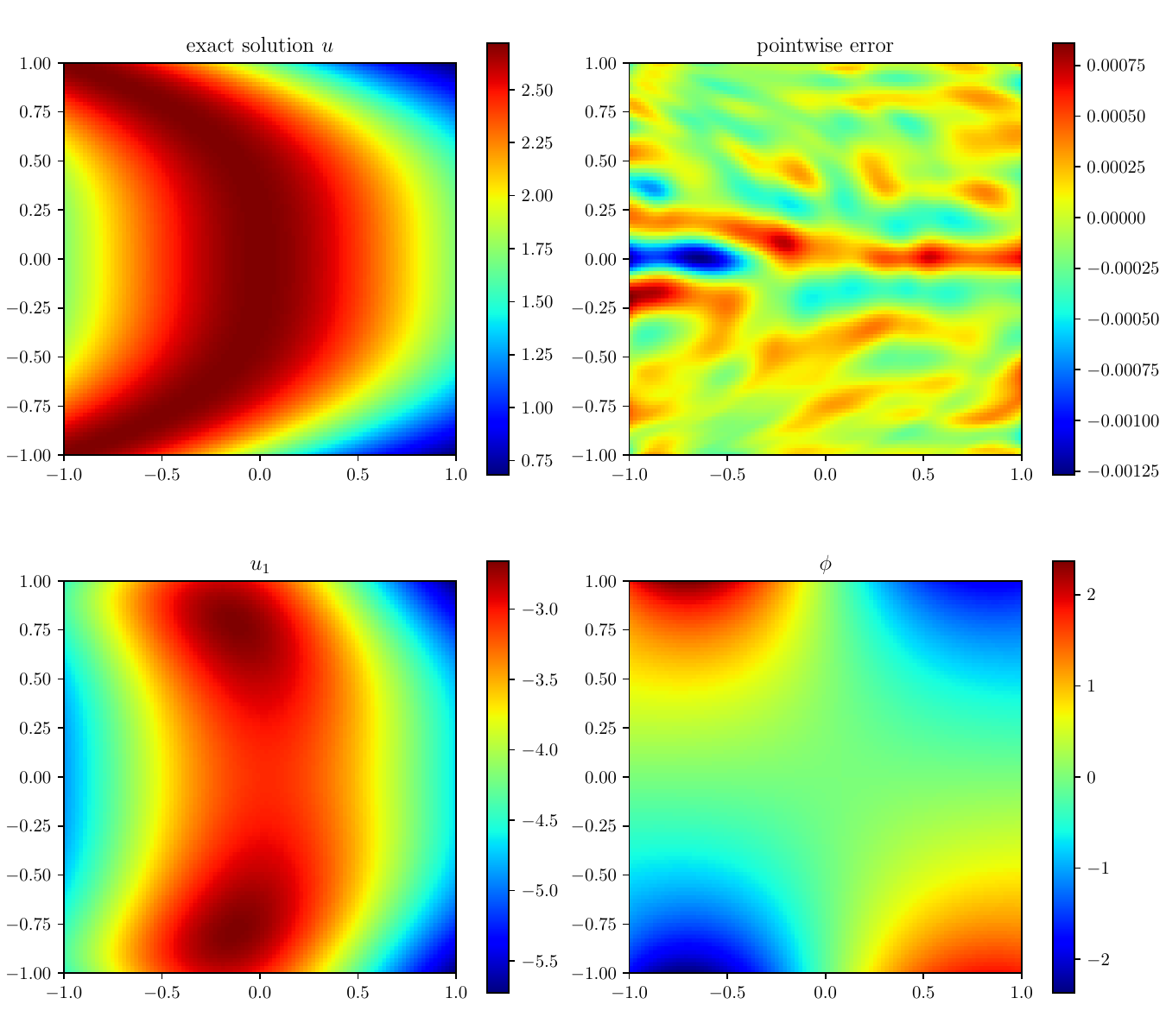}
    \caption{\label{fig:ex3-ndb} The exact solution and
    learned solution using RePUr neural networks for Example 3 using New method}
\end{figure}

\begin{figure}
	\centering
    \includegraphics[width=0.8\textwidth]{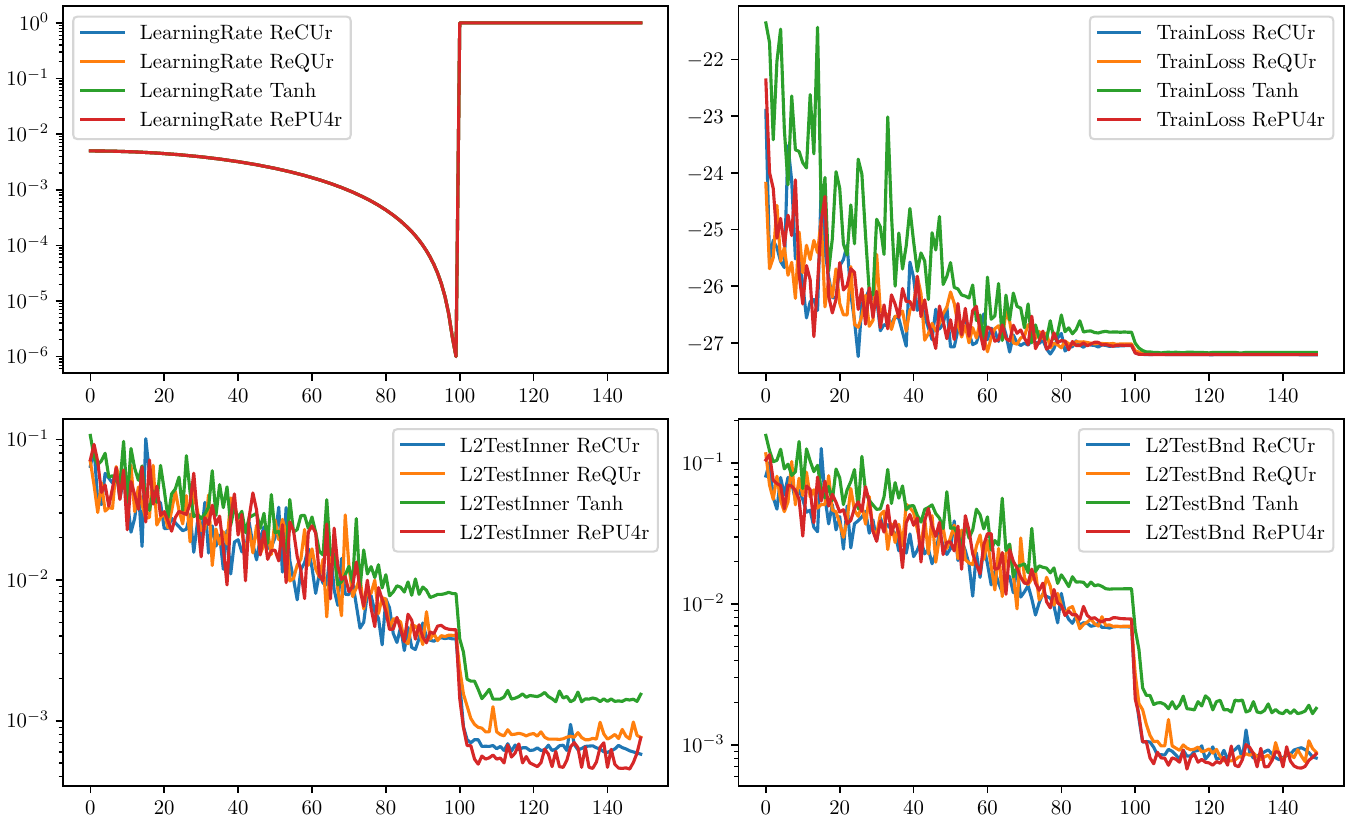}
    \caption{\label{fig:ex4train-ndb} Training results :
    The learning rate (top-left), training loss (top-right)
    $L2$ Testing error on $\Omega$ (bottom-left) and boundary $\Gamma$
    (bottom-right) for Example 4 using New method}
\end{figure}
\begin{figure}
	\centering
    \includegraphics[width=0.8\textwidth]{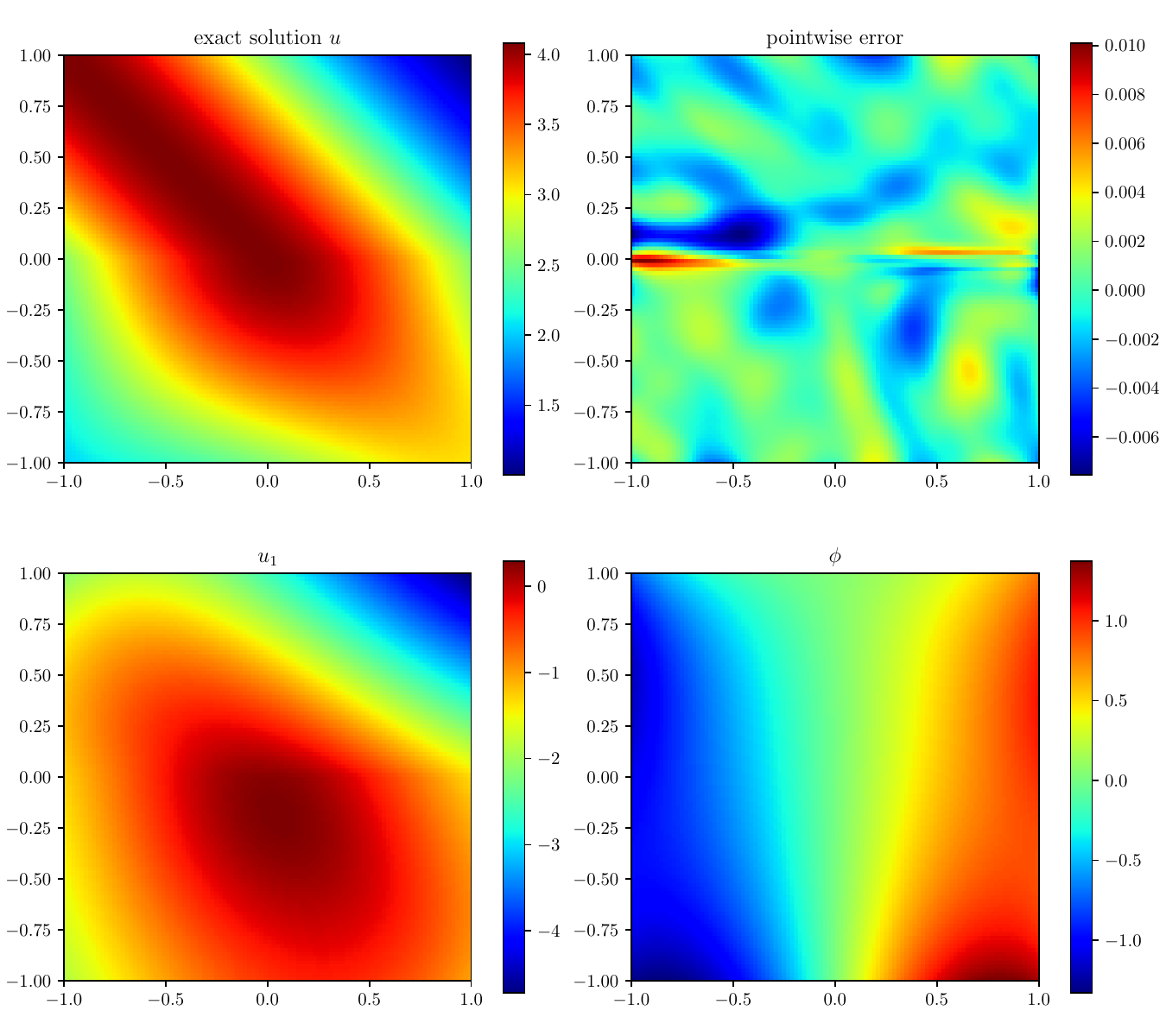}
    \caption{\label{fig:ex4-ndb} The exact solution and
    learned solution using RePUr neural networks for Example 4 using New method}
\end{figure}
\begin{figure}
	\centering
    \includegraphics[width=0.8\textwidth]{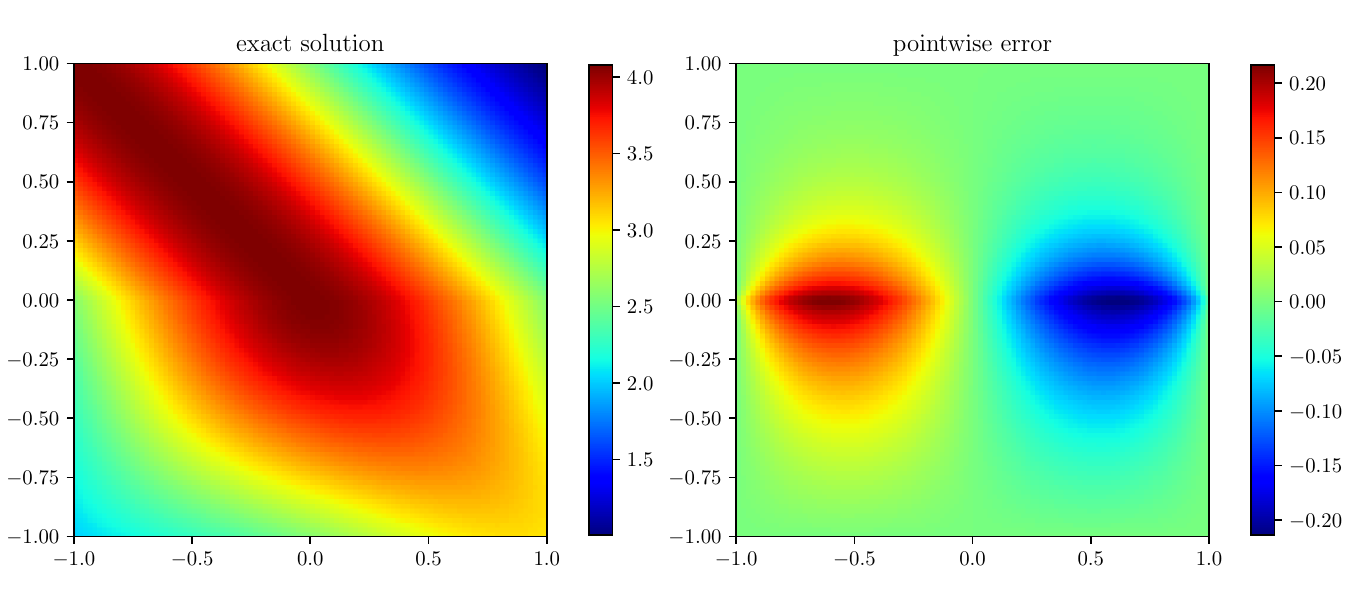}
    \caption{\label{fig:ex4-pinn} The exact solution and
    learned solution for Example 4 using PINN.}
\end{figure}

\begin{figure}
	\centering
    \includegraphics[width=0.8\textwidth]{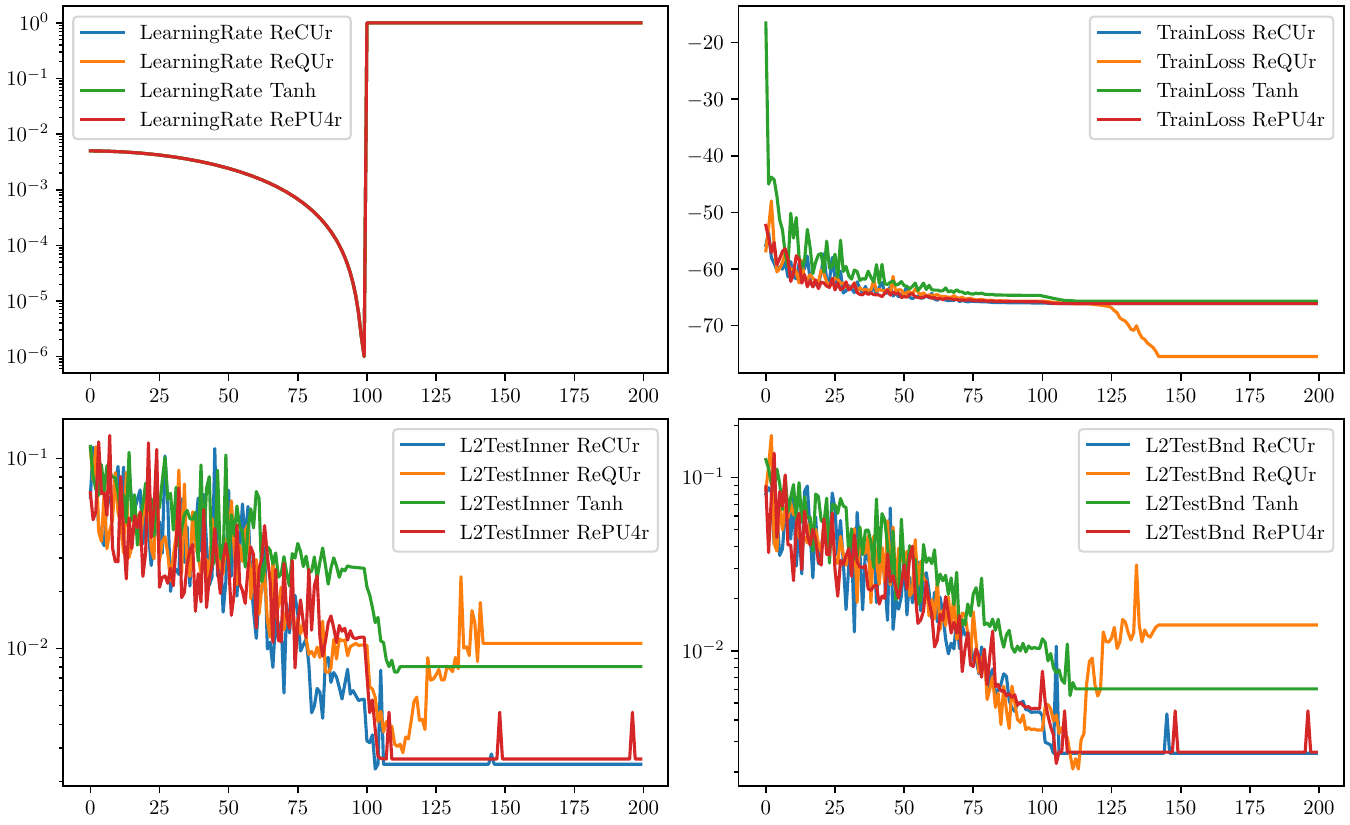}
    \caption{\label{fig:ex5train-ndb} Training results :
    The learning rate (top-left), training loss (top-right)
    $L2$ Testing error on $\Omega$ (bottom-left) and boundary $\Gamma$
    (bottom-right) for Example 5 using New method}
\end{figure}
\begin{figure}
	\centering
    \includegraphics[width=0.8\textwidth]{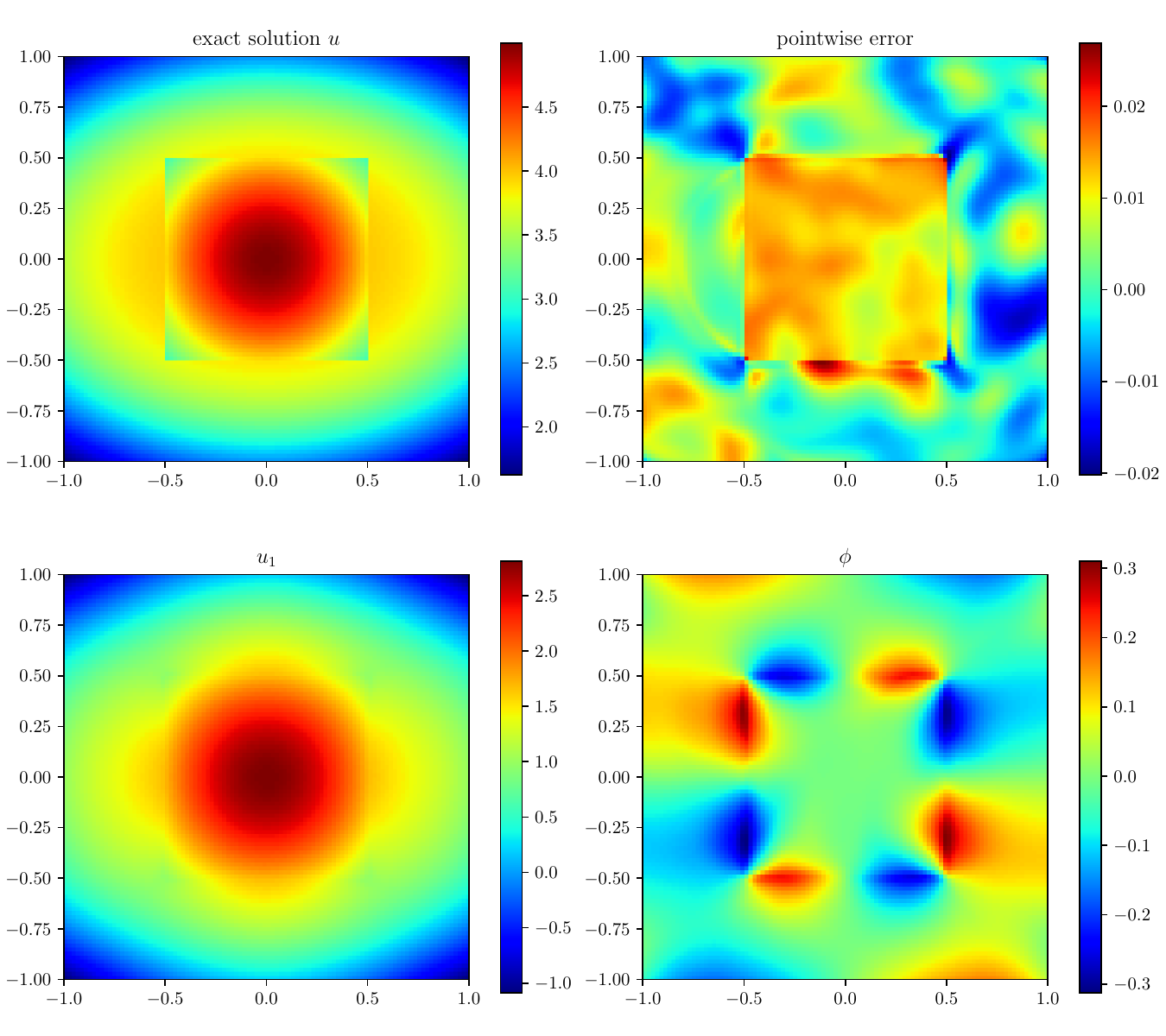}
    \caption{\label{fig:ex5-ndb} The exact solution and
    learned solution using RePUr neural networks for Example 4 using New method}
\end{figure}

\section{Concluding remarks}
\label{sec:conc}

In this paper, we develop a novel strategy for solving essential boundary value problems using neural networks by transforming the original problem into a series of pure natural boundary value problems, which can then be effectively solved using the deep Ritz method. Various model problems are employed to demonstrate the advantages of this approach. While this study focuses on two-dimensional elliptic problems with essential boundary conditions only, several potential extensions of the proposed method are possible:
\begin{enumerate}
\item Extension of the approach, readily, to other types of classical meshfree methods besides neural networks; 
\item Extension to other boundary conditions, e.g. mixed type condition;  
\item Extension to other self-adjoint problems provided relevant complex dualities, and the extension to non-self-adjoint problems is possible; 
\item Extension, dedicatedly, to three-dimensional and higher dimensional problems. 
\end{enumerate}
We will present some of these extensions in future works.

\section*{Acknowledgments}

This work was partially supported by the National Natural Science Foundation of
China (grant numbers 92370205, 12171467, 12271512, 12161141017). The
computations were partially done on the high-performance computers of the State
Key Laboratory of Scientific and Engineering Computing, Chinese Academy of
Sciences.

%\newpage
%
%
%
\section*{References}

%\bibliographystyle{plain}
%\bibliography{DeepPDE}
%\nocite{*}

\end{document}